\newtheorem{theorem}{Theorem}[section]
\newtheorem{definition}[theorem]{Definition}
\newtheorem{proposition}[theorem]{Proposition}
\newtheorem{prop}[theorem]{Proposition}
\newtheorem{corollary}[theorem]{Corollary}
\let\oldtocsection=\tocsection
\let\oldtocsubsection=\tocsubsection
\renewcommand{\tocsection}[2]{\hspace{0em}\oldtocsection{#1}{#2}}
\renewcommand{\tocsubsection}[2]{\hspace{1em}\oldtocsubsection{#1}{#2}}
\newcommand{\comp}{\mathsf{comp}}
\newcommand{\Cob}{\mathsf{Cob}}
\newcommand{\kk}{\mathbf{k}}
\newcommand{\lra}{\longrightarrow}
\newcommand{\mcD}{\mathcal{D}}
\newcommand{\cO}{\mathcal{O}}
\newcommand{\mcO}{\mathcal{O}}
\newcommand{\Orb}{\mathsf{Orb}}
\newcommand{\ddef}{\mathsf{def}}
\newcommand{\Z}{\mathbb{Z}}
\newcommand{\inn}{\mathsf{inn}}
\newcommand{\out}{\mathsf{out}}
\newcommand{\I}{\mathsf{I}}
\title[]{One-dimensional topological theories with defects and linear generating functions}
\author{Mee Seong Im}
 \address{Department of Mathematics, United States Naval Academy, Annapolis, MD 21402, USA}
 \email{\href{meeseongim@gmail.com}{meeseongim@gmail.com}}
\author{Paul Zimmer}
 \address{Department of Mathematics, United States Naval Academy, Annapolis, MD 21402, USA}
 \email{\href{m237146@usna.edu}{m237146@usna.edu}}
\date{\today}
\begin{document}

\maketitle

\begin{abstract}
We study the Gram determinant and construct bases of hom spaces for the one-dimensional topological theory of decorated unoriented one-dimensional cobordisms, as recently defined by Khovanov, 
when the pair of generating functions is linear.  
\end{abstract}

\section{Introduction}\label{section:intro}

Atiyah's celebrated topological quantum field theories framework~\cite{A} can be augmented by non-monoidal (in general) topological theories coming from the universal constructions as in~\cite{BHMV,FKNSWW,Kho20-univ-const,Kho20-1d-cob,KS}. In these constructions one starts with an evaluation of closed $n$-manifolds to elements of the ground field and associates state spaces to closed $(n-1)$-manifolds and maps between spaces to cobordisms between them. Khovanov~\cite{Kho20-1d-cob} have shown  that the resulting theories are interesting even in dimension one, if manifolds are enriched by defects, such as submanifolds or other decorations. In dimension one, defects are zero-dimensional submanifolds that may carry additional labels. Universal theories in this case  relate to noncommutative power series~\cite{Kho20-1d-cob}.  

Without the labels and in the one-dimensional case, one considers one-manifolds equipped with zero-dimensional submanifolds. They can be visualised by dots inside one-manifolds. A closed connected 1-manifold with such defects is diffeomorphic to a circle with some number $k$ of dots. Multiplicative evaluation function $\alpha$ as in~\cite{Kho20-1d-cob} is then determined by its values on such circles, one for each $k\ge 0$, and may be encoded by power series 
\begin{equation} \label{eq_Z_0}
Z_{\alpha}(T)=\sum_{k\ge 0} \gamma_k T^k , \ \ \
\gamma_k =\alpha(\mathrm{circle \ with }\ k\ \mathrm{dots})
\in \kk, 
\end{equation}
where $\kk$ is the ground field. As explained in~\cite{Kho20-1d-cob}, one can furthermore consider cobordisms that may have two types of boundary points, so that components may "end" in the middle of the cobordism, see Figure~\ref{fig2_1} below. This allows to view intervals as closed cobordisms, so that evaluation $\alpha$ acquires additional parameters, its values on an interval with $k$ dots. Then $\alpha$ is encoded by two generating functions, $Z_{\alpha}(T)$ in (\ref{eq_Z_0}) and 
\begin{equation} \label{eq_Z_1} 
Z_{1,\alpha}(T)=\sum_{k\ge 0} \beta_k T^k , \ \ \ \beta_k =\alpha(\mathrm{interval \ with }\ k\ \mathrm{dots})
\in \kk. 
\end{equation} 
Below, we relabel $Z_{\alpha}(T)$ to $Z_{0,\alpha}(T)$. 

To a generating function, there are  several categories of linearized cobordisms associated~\cite{KS,Kho20-1d-cob}. They have not been understood, and in this paper we treat a particular case when each of the two generating functions is linear. In this case we're able to compute the Gram determinant for the spanning set of cobordisms from the empty 0-manifold to $n$ points and show that it's nonzero. This gives a basis of hom spaces between any two objects $n,m$ of monoidal category $\Cob_{\alpha}$, the topological theory associated to $\alpha$, with easily computable composition of basis elements.

\section{Background}\label{section:background}

\subsection{Setup and the main result} 

We follow the framework of~\cite{Kho20-1d-cob} and consider the category of one-dimensional unoriented dotted cobordisms $\Cob_1$ with boundary. We work with the skeletal version of this category where objects are nonnegative integers $n\in \Z_+$. Morphisms from $n$ to $m$ are unoriented dotted one-dimensional manifolds $M$, with the boundary subdivided into the \emph{inner} and \emph{outer} parts, 
\begin{equation*}
    \partial M = (\partial M)_{\inn} \sqcup (\partial M)_{\out},  
\end{equation*}
and a fixed bijection between the outer part and the union of $n+m$ points, 
\begin{equation*}
    (\partial M)_{\out} \cong \{1,2,\dots, n+m\}. 
\end{equation*}
One thinks of $M$ as a cobordism from $n$ to $m$ ordered points, see Figure~\ref{fig2_1}. 

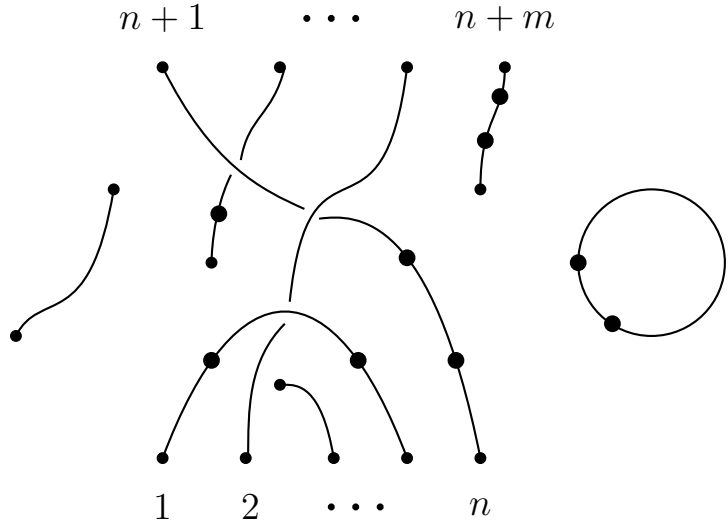
\begin{figure}
    \centering
\begin{tikzpicture}[scale=0.65]
%\draw[thin,gray] (0,0) grid (8,8);
\draw[thick] (0,0) .. controls (1.5,4) and (3.5,4) .. (5,0);
\draw[thick] (1.75,0) .. controls (1.75,1) and (1.75,2) .. (2.5,2.75);
\draw[thick] (2.6,3.2) .. controls (3,7) and (4.5,4) .. (5,8);
\draw[thick] (0,8) .. controls (1,6) and (2,5.5) .. (2.9,5.1);
\draw[thick] (3.2,4.9) .. controls (4,5) and (5.5,5) .. (6.5,0);
\draw[thick] (2.5,8) .. controls (2.25,7) and (1.75,7) .. (1.6,6.1);
\draw[thick] (1.4,5.8) .. controls (1,5) and (1,4) .. (1,4);
\draw[thick] (3.5,0) .. controls (3.25,1.5) and (2.75,1.5) .. (2.5,1.5);
\draw[thick] (7,8) .. controls (7,7) and (6.5,7) .. (6.5,5.5);
\draw[thick,fill] (.1,0) arc (0:360:1mm);
\draw[thick,fill] (1.8,0) arc (0:360:1mm);
\draw[thick,fill] (3.6,0) arc (0:360:1mm);
\draw[thick,fill] (5.1,0) arc (0:360:1mm);
\draw[thick,fill] (6.6,0) arc (0:360:1mm);
\draw[thick,fill] (.1,8) arc (0:360:1mm);
\draw[thick,fill] (2.5,8) arc (0:360:1mm);
\draw[thick,fill] (5.1,8) arc (0:360:1mm);
\draw[thick,fill] (7.1,8) arc (0:360:1mm);
\draw[thick,fill] (1.1,4) arc (0:360:1mm);
\draw[thick,fill] (6.6,5.5) arc (0:360:1mm);
\draw[thick,fill] (2.5,1.5) arc (0:360:1mm);
\draw[thick,fill] (1.3,5) arc (0:360:1.5mm);
\draw[thick,fill] (1.15,2) arc (0:360:1.5mm);
\draw[thick,fill] (4.15,2) arc (0:360:1.5mm);
\draw[thick,fill] (6.15,2) arc (0:360:1.5mm);
\draw[thick,fill] (5.15,4.1) arc (0:360:1.5mm);
\draw[thick,fill] (6.75,6.5) arc (0:360:1.5mm);
\draw[thick,fill] (7.05,7.4) arc (0:360:1.5mm);
\node at (0,-1) {\Large $1$};
\node at (1.8,-1) {\Large $2$};
\draw[thick,fill] (3.5,-1) arc (0:360:.5mm);
\draw[thick,fill] (4,-1) arc (0:360:.5mm);
\draw[thick,fill] (4.5,-1) arc (0:360:.5mm);
\node at (6.5,-1) {\Large $n$};
\node at (0,9) {\Large $n+1$};
\draw[thick,fill] (3,9) arc (0:360:.5mm);
\draw[thick,fill] (3.5,9) arc (0:360:.5mm);
\draw[thick,fill] (4,9) arc (0:360:.5mm);
\node at (7,9) {\Large $n+m$};
\draw[thick] (11.5,4) arc (0:360:1.5);
\draw[thick,fill] (8.65,4) arc (0:360:1.5mm);
\draw[thick,fill] (9.35,2.75) arc (0:360:1.5mm);
\draw[thick] (-3,2.5) .. controls (-2.5,3.5) and (-1.5,2.5) .. (-1,5.5);
\draw[thick,fill] (-2.9,2.5) arc (0:360:1mm);
\draw[thick,fill] (-.9,5.5) arc (0:360:1mm);
\end{tikzpicture}
    \caption{An unoriented dotted cobordism from $n$ to $m$ points. Defects are represented by large dots and boundary points of cobordisms are represented by small dots.}
    \label{fig2_1}
\end{figure}

These cobordisms are composed in the usual way. 
Cobordisms are considered up to rel outer boundary diffeomorphisms and can be described combinatorially. Each cobordism may have \emph{closed} and \emph{boundary} components. Closed components (with empty intersection with the outer boundary) are classified by their topological type (an interval or a circle) together with the number of dots on them (possibly none), see Figure~\ref{fig2_2}. Boundary components may have one or two outer boundary points and will be called \emph{$1$-arcs}, respectively, \emph{$2$-arcs}. Each boundary component may carry dots as well. 

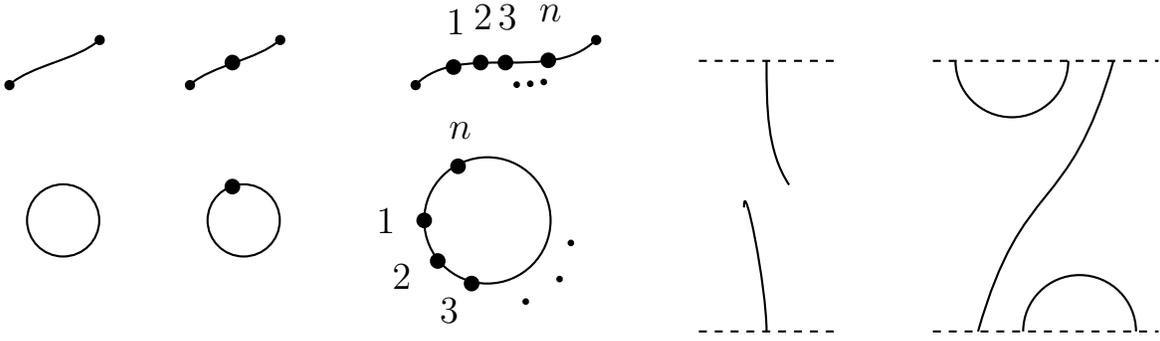
\begin{figure}
    \centering
\begin{tikzpicture}[scale=0.6]
%\draw[thin,gray] (0,0) grid (10,4);
\draw[thick] (0,0) arc (0:360:8mm);    
\draw[thick] (-2,3) .. controls (-1.5,3.45) and (-.5,3.55) .. (0,4);
\draw[thick,fill] (-1.9,3) arc (0:360:.9mm);
\draw[thick,fill] (.1,4) arc (0:360:.9mm);

\draw[thick] (4,0) arc (0:360:8mm); 
\draw[thick,fill] (3.1,.75) arc (0:360:1.5mm);

\draw[thick] (2,3) .. controls (2.5,3.45) and (3.5,3.55) .. (4,4);
\draw[thick,fill] (2.1,3) arc (0:360:.9mm);
\draw[thick,fill] (4.1,4) arc (0:360:.9mm);
\draw[thick,fill] (3.1,3.5) arc (0:360:1.5mm);

\draw[thick] (10,0) arc (0:360:14mm); 
\draw[thick,fill] (7.35,0) arc (0:360:1.5mm);
\node at (6.35,0) {\Large $1$};
\draw[thick,fill] (7.65,-.9) arc (0:360:1.5mm);
\node at (6.7,-1.25) {\Large $2$};
\draw[thick,fill] (8.4,-1.4) arc (0:360:1.5mm);
\node at (7.75,-2) {\Large $3$};

\draw[thick,fill] (9.5,-1.8) arc (0:360:.5mm);
\draw[thick,fill] (10.25,-1.3) arc (0:360:.5mm);
\draw[thick,fill] (10.5,-.5) arc (0:360:.5mm);

\draw[thick,fill] (8.1,1.2) arc (0:360:1.5mm);
\node at (8,2) {\Large $n$};

\draw[thick] (7,3) .. controls (8,4) and (10,3) .. (11,4);

\draw[thick,fill] (7.1,3) arc (0:360:.9mm);
\draw[thick,fill] (11.1,4) arc (0:360:.9mm);

\draw[thick,fill] (8,3.4) arc (0:360:1.5mm);
\node at (7.9,4.4) {\Large $1$};
\draw[thick,fill] (8.6,3.5) arc (0:360:1.5mm);
\node at (8.5,4.5) {\Large $2$};
\draw[thick,fill] (9.15,3.5) arc (0:360:1.5mm);
\node at (9.05,4.5) {\Large $3$};

\draw[thick,fill] (9.3,3.01) arc (0:360:.5mm);
\draw[thick,fill] (9.6,3.03) arc (0:360:.5mm);
\draw[thick,fill] (9.9,3.07) arc (0:360:.5mm);

\draw[thick,fill] (10.1,3.55) arc (0:360:1.5mm);
\node at (10,4.6) {\Large $n$};
\end{tikzpicture}
\qquad  \quad
\begin{tikzpicture}[scale=0.6]
\draw[thick,dashed] (0,0) -- (3,0);
\draw[thick,dashed] (0,6) -- (3,6);
\draw[thick] (1.5,0) .. controls (1.5,1) and (1,3.5) .. (1,2.75); 
\draw[thick] (1.5,6) .. controls (1.5,5) and (1.5,4) .. (2,3.25);
\end{tikzpicture}
\qquad  \quad 
\begin{tikzpicture}[scale=0.6]
\draw[thick,dashed] (0,0) -- (5,0);
\draw[thick,dashed] (0,6) -- (5,6);
\draw[thick] (1,0) .. controls (2,3.5) and (3,2.5) .. (4,6);
\draw[thick] (4.5,0) arc (0:180:1.25);
\draw[thick] (.5,6) arc (180:360:1.25);
\end{tikzpicture}
    \caption{On the left: possible floating connected components of a cobordism include an arc with $n\ge 0$ dots and a circle with $n\ge 0$ dots. On the right: five types of components which contain one or two outer boundary points, including two types of 1-arcs and three types of 2-arcs.}
    \label{fig2_2}
\end{figure}

An evaluation function or series $\alpha$ is a map from the set of diffeomorphism classes of closed connected diagrams to the ground field $\kk$. It can be encoded by two power series
\begin{equation} 
Z_{0,\alpha}(T)=\sum_{n\ge 0} \beta_n T^n, \qquad 
Z_{1,\alpha}(T)=\sum_{n\ge 0} \gamma_n T^n, 
\end{equation} 
where $\beta_n$, respectively $\gamma_n$, is the value of $\alpha$ on an interval with $n$ dots, respectively on a circle with $n$ dots, $n\ge 0$, see Figure~\ref{fig2_4_2}. 

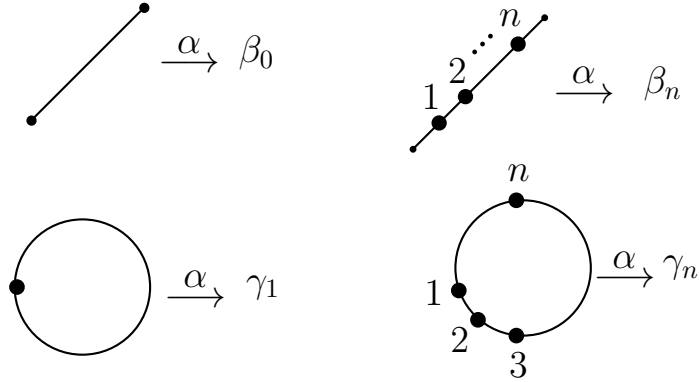
\begin{figure}
    \centering
\begin{tikzpicture}[scale=0.6]
\draw[thick] (-.5,-.5) -- (2,2);
\draw[thick,fill] (-.4,-.5) arc (0:360:.9mm);
\draw[thick,fill] (2.1,2) arc (0:360:.9mm);
\node at (3,1) {$\stackrel{\mbox{\Large $\alpha$}}{\mbox{\Large $\longrightarrow$}}$};
\node at (4.5,1) {\Large $\beta_0$};
\node at (0,-1) {};
\end{tikzpicture}
\qquad \qquad 
\begin{tikzpicture}[scale=0.35]
\draw[thick] (0,0) -- (5,5); 
\draw[thick,fill] (0.1,0) arc (0:360:.9mm);
\draw[thick,fill] (5.1,5) arc (0:360:.9mm);
\draw[thick,fill] (1.25,1) arc (0:360:2.5mm);
\node at (.7,2) {\Large $1$};
\draw[thick,fill] (2.25,2) arc (0:360:2.5mm);
\node at (1.7,3) {\Large $2$};
\draw[thick,fill] (4.25,4) arc (0:360:2.5mm);
\node at (3.7,5) {\Large $n$};
\draw[thick,fill] (2.4,3.7) arc (0:360:.5mm);
\draw[thick,fill] (2.7,4) arc (0:360:.5mm);
\draw[thick,fill] (3,4.3) arc (0:360:.5mm);
\node at (6.5,2.5) {${\stackrel{\mbox{\Large $\alpha$}}{\mbox{\Large $\longrightarrow$}}}$};
\node at (9.5,2.5) {\Large $\beta_n$};
\end{tikzpicture} \\ 
\begin{tikzpicture}[scale=0.6]
\draw[thick] (0,0) arc (0:360:1.5);
\draw[thick,fill] (-2.8,0) arc (0:360:1.5mm);
\node at (1,0) {$\stackrel{\mbox{\Large $\alpha$}}{\mbox{\Large $\longrightarrow$}}$};
\node at (2.5,0) {\Large $\gamma_1$};
\node at (0,-2) {};
\end{tikzpicture}
\qquad \qquad 
\begin{tikzpicture}[scale=0.3]
\draw[thick] (0,0) arc (0:360:3);
\draw[thick,fill] (-5.55,-1) arc (0:360:3mm);
\node at (-6.95,-1.1) {\Large $1$};
\draw[thick,fill] (-4.7,-2.3) arc (0:360:3mm);
\node at (-5.8,-3.2) {\Large $2$};
\draw[thick,fill] (-3,-3) arc (0:360:3mm);
\node at (-3.2,-4.3) {\Large $3$};
\draw[thick,fill] (-3,3) arc (0:360:3mm);
\node at (-3.2,4.2) {\Large $n$};
\node at (1.5,0) {$\stackrel{\mbox{\Large $\alpha$}}{\mbox{\Large $\longrightarrow$}}$};
\node at (4,0) {\Large $\gamma_n$};
\end{tikzpicture}
    \caption{ Evaluation function $\alpha$. }

    \label{fig2_4_2}
\end{figure}

To an evaluation $\alpha$ one can associate several  categories and functors between them as in~\cite{KS,Kho20-1d-cob}. 
To $n$ points there's also associated a vector space $A(n)$. One considers a vector space $V(n)$ with a basis of all cobordisms $x$ as above from $0$ to $n$. Denote by $\overline{x}$ the reflection of $x$, a cobordism from $n$ to $0$, see Figure~\ref{fig2_4_1} top.

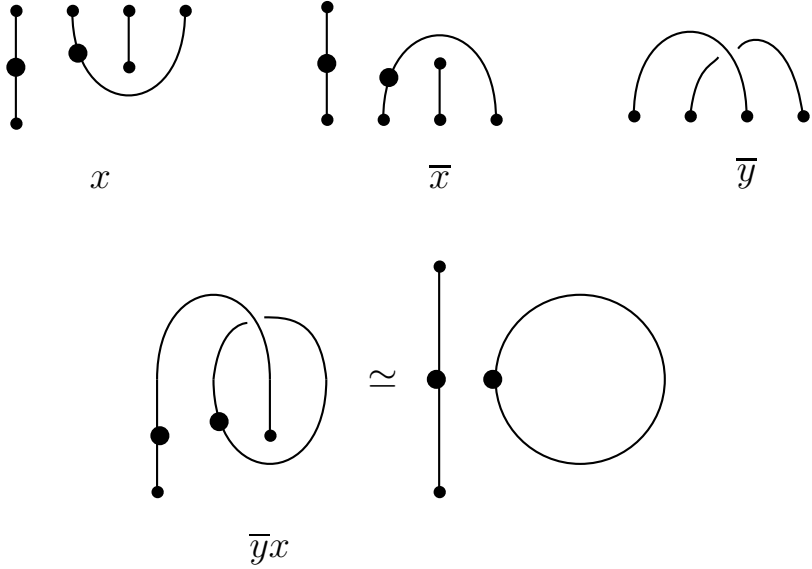
\begin{figure}
    \centering
\begin{tikzpicture}[scale=0.75]
\draw[thick] (0.5,0) -- (0.5,1);
\draw[thick] (0.5,1) -- (0.5,2);
\draw[thick,fill] (.6,0) arc (0:360:.9mm);
\draw[thick,fill] (.65,1) arc (0:360:1.5mm);
\draw[thick,fill] (.6,2) arc (0:360:.9mm);
\draw[thick] (1.5,2) .. controls (1.5,0) and (3.5,0) .. (3.5,2);
\draw[thick,fill] (1.6,2) arc (0:360:.9mm);
\draw[thick,fill] (3.6,2) arc (0:360:.9mm);
\draw[thick] (2.5,2) -- (2.5,1);
\draw[thick,fill] (2.6,2) arc (0:360:.9mm);
\draw[thick,fill] (2.6,1) arc (0:360:.9mm);
\draw[thick,fill] (1.75,1.25) arc (0:360:1.5mm);
\node at (2,-1) {\Large $x$};
\end{tikzpicture}
\qquad 
\qquad 
\begin{tikzpicture}[scale=0.75]
\draw[thick] (0,0) -- (0,2);
\draw[thick,fill] (.15,1) arc (0:360:1.5mm);
\draw[thick,fill] (.1,0) arc (0:360:.9mm);
\draw[thick,fill] (.1,2) arc (0:360:.9mm);
\node at (2,-1) {\Large $\overline{x}$};
\draw[thick] (1,0) .. controls (1,2) and (3,2) .. (3,0);
\draw[thick,fill] (1.1,0) arc (0:360:.9mm);
\draw[thick,fill] (3.1,0) arc (0:360:.9mm);
\draw[thick] (2,0) -- (2,1);
\draw[thick,fill] (2.1,1) arc (0:360:.9mm);
\draw[thick,fill] (2.1,0) arc (0:360:.9mm);
\draw[thick,fill] (1.25,.75) arc (0:360:1.5mm);
\end{tikzpicture} 
\qquad \qquad 
\begin{tikzpicture}[scale=0.75]
\draw[thick] (0,0) .. controls (0,2) and (2,2) .. (2,0);
\draw[thick] (1,0) .. controls (1.1,.9) and (1.4,.9) .. (1.5,1.05);
\draw[thick] (1.85,1.2) .. controls (2.05,1.5) and (2.8,1.5) .. (3,0);
\draw[thick,fill] (0.1,0) arc (0:360:.9mm);
\draw[thick,fill] (1.1,0) arc (0:360:.9mm);
\draw[thick,fill] (2.1,0) arc (0:360:.9mm);
\draw[thick,fill] (3.1,0) arc (0:360:.9mm);
\node at (2,-1) {\Large $\overline{y}$};
\end{tikzpicture} \\ 
\begin{tikzpicture}[scale=0.75]
\draw[thick] (0,0) -- (0,1);
\draw[thick,fill] (.1,0) arc (0:360:.9mm);
\draw[thick] (0,1) -- (0,2);
\draw[thick,fill] (.2,1) arc (0:360:1.5mm);
\draw[thick] (1,2) .. controls (1,0) and (3,0) .. (3,2);
\draw[thick] (2,2) -- (2,1);
\draw[thick,fill] (2.1,1) arc (0:360:.9mm);
\draw[thick,fill] (1.25,1.25) arc (0:360:1.5mm);

\draw[thick] (0,2) .. controls (0,4) and (2,4) .. (2,2);
\draw[thick] (1,2) .. controls (1.1,2.9) and (1.5,3.0) .. (1.6,3.0);
\draw[thick] (1.9,3.1) .. controls (2.3,3.1) and (2.9,3.1) .. (3,2);
%\draw[thick,fill] (0.1,2) arc (0:360:.9mm);
%\draw[thick,fill] (1.1,2) arc (0:360:.9mm);
%\draw[thick,fill] (2.1,2) arc (0:360:.9mm);
%\draw[thick,fill] (3.1,2) arc (0:360:.9mm);
\node at (2,-1) {\Large $\overline{y}x$};
\node at (4,2) {\Large $\simeq$};
\draw[thick] (5,0) -- (5,4);
\draw[thick,fill] (5.1,4) arc (0:360:.9mm);
\draw[thick,fill] (5.1,0) arc (0:360:.9mm);
\draw[thick] (9,2) arc (0:360:1.5);
\draw[thick,fill] (5.1,2) arc (0:360:1.5mm);
\draw[thick,fill] (6.1,2) arc (0:360:1.5mm);
\node at (5,5) {};
\end{tikzpicture}
    \caption{ An example of a cobordism $x$ from $0$ to $4$, reflections $\overline{x},\overline{y}$ and coupling $\overline{y}x$. } 
    \label{fig2_4_1}
\end{figure}

For two cobordisms $x,y$ from $0$ to $n$, the composition $\overline{y}x$ is a cobordism from $0$ to $0$ (a closed cobordism) and can be evaluated via $\alpha$, see an example on Figure~\ref{fig2_4_1}.

This induces a bilinear form on $V(n)$, 
\begin{equation}
   (\:\:,\:\:) \ : \  V(n)\times V(n) \lra \kk,  \hspace{4mm} 
   (x,y) = \alpha(\overline{y}x). 
\end{equation}
Define $A(n)$ as the quotient of $V(n)$ by the kernel of the bilinear form, 
\begin{equation*}
    A(n) \ := \ V(n)/\mathrm{ker}((\:\cdot , \cdot \:)). 
\end{equation*}
From vector spaces $A(n)$ one can build monoidal category $\Cob_{\alpha}$ as  in~\cite{Kho20-1d-cob} by identifying homs from $n$ to $m$ in the latter category with $A(n+m)$ and forming suitable compositions of morphisms.

In this manuscript, we consider a very special evaluation $\alpha$; it is nonzero only on four diffeomorphism classes of decorated connected 1-manifolds and  
\begin{itemize}
    \item takes values $\beta_0,\beta_1$ on an interval with no dots and one dot, respectively;
    \item takes values $\gamma_0,\gamma_1$ on a circle with no dots and one dot, respectively; 
    \item evaluates any connected 1-manifold with at least two dots to $0$; 
\end{itemize}
see Figure~\ref{fig2_7}.

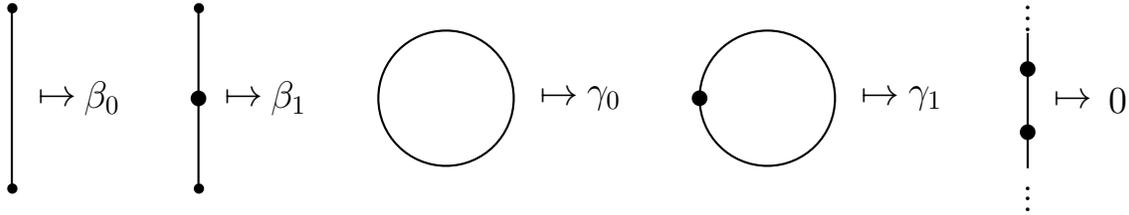
\begin{figure}
    \centering
\begin{tikzpicture}[scale=0.6]
\draw[thick] (0,0) -- (0,4);
\draw[thick,fill] (0.1,4) arc (0:360:.9mm);
\draw[thick,fill] (0.1,0) arc (0:360:.9mm);
\node at (1,2) {\Large $\mapsto$};
\node at (2,2) {\Large $\beta_0$};
\node at (1,-.5) {};
\end{tikzpicture}
\quad  \:\:
\begin{tikzpicture}[scale=0.6]
\draw[thick] (0,0) -- (0,4);
\draw[thick,fill] (0.1,4) arc (0:360:.9mm);
\draw[thick,fill] (0.1,0) arc (0:360:.9mm);
\draw[thick,fill] (0.15,2) arc (0:360:1.5mm);
\node at (1,2) {\Large $\mapsto$};
\node at (2,2) {\Large $\beta_1$};
\node at (1,-.5) {};
\end{tikzpicture}
\quad  \:\: 
\begin{tikzpicture}[scale=0.6]
\draw[thick] (0,0) arc (0:360:1.5);  
\node at (1,0) {\Large $\mapsto$};
\node at (2,0) {\Large $\gamma_0$};
\node at (1,-2.5) {};
\end{tikzpicture}
\quad  \:\: 
\begin{tikzpicture}[scale=0.6]
\draw[thick] (0,0) arc (0:360:1.5);  
\draw[thick,fill] (-2.85,0) arc (0:360:1.5mm);
\node at (1,0) {\Large $\mapsto$};
\node at (2,0) {\Large $\gamma_1$}; 
\node at (1,-2.5) {};
\end{tikzpicture}
\quad \:\:
\begin{tikzpicture}[scale=0.6]
\draw[thick] (0,0.5) -- (0,3.5);
\draw[thick,fill] (0.15,1.3) arc (0:360:1.5mm);
\draw[thick,fill] (0.15,2.7) arc (0:360:1.5mm);
\node at (1,2) {\Large $\mapsto$};
\node at (2,2) {\Large $0$};    
\node at (0,0) {\Large $\vdots$};    
\node at (0,4) {\Large $\vdots$};    
\end{tikzpicture}
    \caption{Closed diagrams under the evaluation map $\alpha$.}
    \label{fig2_7}
\end{figure}

We study vector spaces $A(n)$ and associated categories for this special evaluation $\alpha$, for which both $Z_{0,\alpha},Z_{1,\alpha}$  are linear  functions, 
\begin{equation}
    Z_0(\alpha) = \beta_0 + \beta_1 T , \qquad 
    Z_1(\alpha) = \gamma_0+ \gamma_1 T, \qquad  \beta_0,\beta_1,\gamma_0,\gamma_1\in \kk. 
\end{equation}

The evaluation $\alpha$ is zero once a closed cobordism has a component with at least two dots, see Figure~\ref{fig2_7} right, since quadratic and all higher order terms of $Z_{0,\alpha},Z_{1,\alpha}$ are $0$. This means there's a relation that two dots on the line evaluate to $0$, both in the skein category and in $\Cob_{\alpha}$. In particular, in $\Cob_{\alpha}$ a morphism can be reduced to a linear combination of cobordisms with no closed components and with at most one dot on each component. Likewise, vector space $A(n)$ has a spanning set of such cobordisms (from $0$ to $n$). We call these cobordisms \emph{$n$-diagrams} and denote their set by $\mcD_n$. A connected component of an $n$-diagram can be one of the four types, see Figure~\ref{fig2_5_1}: 
\begin{enumerate}
    \item a dotless arc, 
    \item arc with one dot, 
    \item a dotless cup, 
    \item a cup with one dot. 
\end{enumerate}
Types (1) and (2) may also be called 1-arcs, types (3) and (4) may be called 2-arcs. 

\begin{figure}
    \centering
\begin{tikzpicture}[scale=0.6]
\draw[thick,dashed] (0,0) -- (4,0); 
\draw[thick] (2,0) -- (2,-2);
\draw[thick,fill] (2.1,-2) arc (0:360:.9mm);
\node at (2,-3) {\Large $(1)$};
\draw[thick,dashed] (6,0) -- (10,0); 
\draw[thick,fill] (8.15,-1) arc (0:360:1.5mm);
\draw[thick,fill] (8.1,-2) arc (0:360:.9mm);
\draw[thick] (8,0) -- (8,-2);
\node at (8,-3) {\Large $(2)$};
\draw[thick,dashed] (12,0) -- (16,0); 
\draw[thick] (12.5,0) arc (180:360:1.5);
\node at (14,-3) {\Large $(3)$};
\draw[thick,dashed] (18,0) -- (22,0); 
\draw[thick] (18.5,0) arc (180:360:1.5);
\draw[thick,fill] (20.15,-1.5) arc (0:360:1.5mm);
\node at (20,-3) {\Large $(4)$};
\end{tikzpicture}
    \caption{Four types of components of an $n$-diagram.  Types (1) and (2) are 1-arcs, types (3) and (4) are 2-arcs, see earlier.}
    \label{fig2_5_1}
\end{figure}
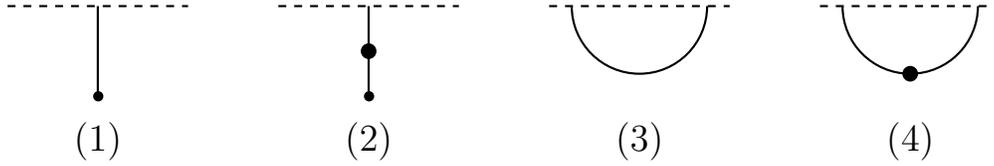

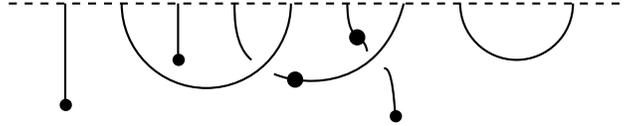
\begin{figure}
    \centering
\begin{tikzpicture}[scale=0.75]
\draw[thick,dashed] (0,0) -- (11,0);
\draw[thick] (1,0) -- (1,-1.8);
\draw[thick,fill] (1.1,-1.8) arc (0:360:.9mm);
\draw[thick] (2,0) arc (180:360:1.5);

\draw[thick] (3,0) -- (3,-1);
\draw[thick,fill] (3.1,-1) arc (0:360:.9mm);

\draw[thick] (4,0) .. controls (4,-.25) and (4,-.75) .. (4.3,-1);
\draw[thick] (4.7,-1.25) .. controls (5.3,-1.5) and (6.6,-1.5) .. (7,0);
\draw[thick,fill] (5.2,-1.35) arc (0:360:1.25mm);

\draw[thick] (6,0) .. controls (6,-.75) and (6.3,-.5) .. (6.35,-.85);
\draw[thick,fill] (6.3,-.6) arc (0:360:1.25mm);

\draw[thick] (6.65,-1.15) .. controls (6.70,-1.15) and (6.8,-1.15) .. (6.85,-2);
\draw[thick,fill] (6.95,-2) arc (0:360:.9mm);

\draw[thick] (8,0) arc (180:360:1);

\end{tikzpicture}
    \caption{Example of a diagram in the orbit $\cO(2,1,2,1)$.}
    \label{fig2_5_2}
\end{figure}

The symmetric group $S_n$ acts on the set of $n$-diagrams by permuting the endpoints. Denote the set of orbits of the  action by $\Orb_n$. 
For an $n$-diagram $X\in \mcD_n$ let 
\begin{itemize}
\item $a$ be the number of dotless strands, 
\item $b$ be the number of strands with a single dot, 
\item $c$ be the number of dotless cups, and 
\item $d$ be the number of cups with a single dot in a diagram. 
\end{itemize} 
We have 
\begin{equation*}
    a+b+2c+2d = n, 
\end{equation*}
and the quadruple $(a,b,c,d)$ is a complete invariant of an orbit of $S_n$ on $\mcD_n$. Figure~\ref{fig2_5_2} shows an 
example of a 9-diagram with parameters $(2,1,2,1)$. 

We define $\cO(a,b,c,d)=S_n X \in \Orb_n$ as the orbit which consist of all $n$-diagrams with  the parameters $(a,b,c,d)$. Then 
\begin{equation*}
    \mcD_n = \coprod_{a,b,c,d} \cO(a,b,c,d), \hspace{4mm} 
    a+b+2c+2d=n,  \hspace{4mm} 
    a,b,c,d\ge 0.
\end{equation*}

For a diagram $X\in \cO(a,b,c,d)$, define 
\begin{itemize}
    \item $\ddef(X)=b+d$, the number of defect points in the diagram $X$,
     \item $\comp(X)=a+b+c+d$, the total number of connected components of $X$. 
\end{itemize}

\vspace{0.1in} 

Diagrams in $\mcD_n$ span $A(n)$. To see whether they constitute a basis of $A(n)$ we need to form the Gram matrix by pairing these diagrams and see whether it's nondenegenerate. 

Recall that the reflection $\overline{X}$ of an $n$-diagram $X$ is given by reflecting $X$ in a horizontal line, so it becomes a cobordism from $n$ to $0$. Given two $n$-diagrams $X,Y$, we can compose $\overline{Y}$ and $X$ to get a closed diagram $\overline{Y}X$. It can then be evaluated via $\alpha$. Thus, we define
\begin{equation}
    \alpha(X,Y) \ := \ \alpha(\overline{Y}X) \in \kk.
\end{equation}
An example can be found in Figure~\ref{fig2_4_1}, for $n=4$ and diagrams $x$ and $y$ with parameters $(1,1,0,1)$ and $(0,0,2,0)$, respectively. 
Evaluation $\alpha(\overline{y}x) = \beta_1\gamma_1$. 

\vspace{0.1in} 

Picking an order on the set $\mcD_n$ of $n$-diagrams gives us the Gram matrix $G$. 
We now explain how to compute the determinant of $G$. It turns out that, after suitable permutations of rows and columns, $G$
becomes lower-triangular. To see this, we first define the notion of the dual of an $n$-diagram.

\begin{definition}
\label{defn:dual-diagram}
The \textbf{dual} $\iota(X)$ of a diagram $X\in \mcD_n$ is the diagram with the same arcs and cups as $X$, with the dots removed from single-dotted components and a single dot added to each dotless component.  
\end{definition}

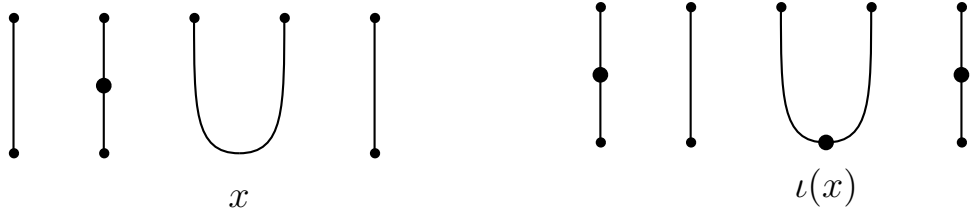
\begin{figure}
    \centering
\begin{tikzpicture}[scale=0.6]
\draw[thick] (-3,0) -- (-3,3);
\draw[thick,fill] (-2.9,3) arc (0:360:.9mm);
\draw[thick,fill] (-2.9,0) arc (0:360:.9mm);
\draw[thick] (-1,0) -- (-1,3);
\draw[thick,fill] (-.9,3) arc (0:360:.9mm);
\draw[thick,fill] (-.9,0) arc (0:360:.9mm);
\draw[thick,fill] (-.85,1.5) arc (0:360:1.5mm);
\draw[thick] (1,3) .. controls (1,1) and (1,0) .. (2,0);
\draw[thick,fill] (1.1,3) arc (0:360:.9mm);
\draw[thick,fill] (3.1,3) arc (0:360:.9mm);
\draw[thick] (3,3) .. controls (3,1) and (3,0) .. (2,0);
\draw[thick] (5,0) -- (5,3);
\draw[thick,fill] (5.1,3) arc (0:360:.9mm);
\draw[thick,fill] (5.1,0) arc (0:360:.9mm);
\node at (2,-1) {\Large $x$};
\end{tikzpicture}
\qquad \qquad \qquad \quad 
\begin{tikzpicture}[scale=0.6]
\draw[thick,fill] (-2.9,3) arc (0:360:.9mm);
\draw[thick,fill] (-2.9,0) arc (0:360:.9mm);
\draw[thick] (-3,0) -- (-3,3);
\draw[thick,fill] (-2.85,1.5) arc (0:360:1.5mm);
\draw[thick] (-1,0) -- (-1,3);
\draw[thick,fill] (-.9,3) arc (0:360:.9mm);
\draw[thick,fill] (-.9,0) arc (0:360:.9mm);
\draw[thick] (1,3) .. controls (1,1) and (1,0) .. (2,0);
\draw[thick,fill] (3.1,3) arc (0:360:.9mm);
\draw[thick,fill] (1.1,3) arc (0:360:.9mm);
\draw[thick,fill] (2.15,0) arc (0:360:1.5mm);
\draw[thick] (3,3) .. controls (3,1) and (3,0) .. (2,0);
\draw[thick] (5,0) -- (5,3);
\draw[thick,fill] (5.1,3) arc (0:360:.9mm);
\draw[thick,fill] (5.1,0) arc (0:360:.9mm);
\draw[thick,fill] (5.15,1.5) arc (0:360:1.5mm);
\node at (2,-1) {\Large $\iota(x)$};
\end{tikzpicture}
    \caption{Example of a diagram $x$ and its dual $\iota(x)$ when $n=5$. }
    \label{fig2_3}
\end{figure}
 

An example can be found in Figure~\ref{fig2_3}. 
Map $\iota: \mcD_n\lra \mcD_n$ is an involution on the set of $n$-diagrams. It descends to an involution on the set of $S_n$-orbits, also denoted $\iota$. 

It is convenient to twist the pairing $\alpha(\:\:,\:\:)$ by $\iota$ and define $\alpha_{\iota}(X,Y):=\alpha(X,\iota(Y))=\alpha(\overline{\iota(Y)}X)$. An example of twisted pairing is computed in Figure~\ref{fig2_5}.

Both pairings are maps $\alpha,\alpha_{\iota}: \mcD_n\times \mcD_n \lra \kk$.

%Given two diagrams $X,Y$, the evaluation $\alpha:\mcD_n\times \mcD_n\rightarrow \kk$ of a pair of diagrams $X$ and $Y$ is defined to be $\alpha(X,Y):=\alpha(\overline{\iota(Y)}X).  

\subsection{Understanding the Gram matrix}
\label{subsection:gluing-diagrams-Gram-matrix}

Up to a possible sign, instead of computing determinant of the Gram matrix $G$ we can compute the determinant of the twisted matrix $G_{\iota}$ with the entry on the intersection of $X$-row and $Y$-column  $\alpha_{\iota}(X,Y)=\alpha(\overline{\iota(Y)}X).$

\vspace{0.1in}

If one of the components of closed diagram $\overline{\iota(Y)}X$  contains at least two dots, the evaluation is zero. Otherwise it is  $\beta_0^{k_0}\beta_1^{k_1}\gamma_0^{k_2}\gamma_1^{k_3}$, where $k_0,k_1,k_2,k_3$ are the number of free (dotless) arcs, arcs with a dot, free circles, and circles with a dot, respectively, see Figure~\ref{fig2_5} for an example. 

%{\bf Set up Gram matrix $G_n$.} 

\begin{figure}
    \centering
\begin{tikzpicture}[scale=0.5]
\node at (-5,0) {\Large $\alpha_{\iota}$};
\draw[thick] (-4,1.75) .. controls (-4.5,.75) and (-4.5,-.75) .. (-4,-1.75);
\draw[thick] (-3,-1.5) -- (-3,1.5);
\draw[thick,fill] (-2.9,1.5) arc (0:360:.9mm);
\draw[thick,fill] (-2.9,-1.5) arc (0:360:.9mm);
\draw[thick,fill] (-2.82,0) arc (0:360:1.75mm);
\draw[thick] (-1,-1.5) -- (-1,1.5);
\draw[thick,fill] (-.9,1.5) arc (0:360:.9mm);
\draw[thick,fill] (-.9,-1.5) arc (0:360:.9mm);
\draw[thick] (1,1.5) .. controls (1,-0.5) and (1,-1.5) .. (2,-1.5);
\draw[thick,fill] (1.1,1.5) arc (0:360:.9mm);
\draw[thick,fill] (3.1,1.5) arc (0:360:.9mm);
\draw[thick] (3,1.5) .. controls (3,-0.5) and (3,-1.5) .. (2,-1.5);
\node at (4,-1.5) {\Large $\Huge{,}$};
\draw[thick] (5,-1.5) -- (5,1.5);
\draw[thick,fill] (5.1,1.5) arc (0:360:.9mm);
\draw[thick,fill] (5.1,-1.5) arc (0:360:.9mm);
\draw[thick] (7,1.5) .. controls (7,-0.5) and (7,-1.5) .. (8,-1.5);
\draw[thick,fill] (7.1,1.5) arc (0:360:.9mm);
\draw[thick,fill] (9.1,1.5) arc (0:360:.9mm);
\draw[thick] (9,1.5) .. controls (9,-0.5) and (9,-1.5) .. (8,-1.5);
\draw[thick,fill] (8.2,-1.5) arc (0:360:1.75mm);

\draw[thick] (11,-1.5) -- (11,1.5);
\draw[thick,fill] (11.1,1.5) arc (0:360:.9mm);
\draw[thick,fill] (11.1,-1.5) arc (0:360:.9mm);
\draw[thick] (12,1.75) .. controls (12.5,.75) and (12.5,-.75) .. (12,-1.75);
\node at (13,0) {\Large $=$};
\node at (14,0) {\Large $\alpha$};
\draw[thick] (15,1.75) .. controls (14.5,.75) and (14.5,-.75) .. (15,-1.75);
\draw[thick] (16,-1.5) -- (16,-0.15);
\draw[thick,fill] (16.1,-1.5) arc (0:360:.9mm);
\draw[thick,fill] (16.1,-.15) arc (0:360:.9mm);
\draw[thick,fill] (16.2,-.85) arc (0:360:1.75mm);

\draw[thick] (18,-1.5) -- (18,-.15);
\draw[thick,fill] (18.1,-.15) arc (0:360:.9mm);
\draw[thick,fill] (18.1,-1.5) arc (0:360:.9mm);

\draw[thick] (20,-.15) .. controls (20,-0.5) and (20,-1.5) .. (21,-1.5);
\draw[thick,fill] (20.1,-.15) arc (0:360:.9mm);
\draw[thick,fill] (22.1,-.15) arc (0:360:.9mm);
\draw[thick] (22,-.15) .. controls (22,-0.5) and (22,-1.5) .. (21,-1.5);

\draw[thick] (16,1.5) -- (16,.11);
\draw[thick,fill] (16.1,1.5) arc (0:360:.9mm);
\draw[thick,fill] (16.1,.11) arc (0:360:.9mm);
\draw[thick,fill] (16.2,.85) arc (0:360:1.75mm);
 
\draw[thick] (18,.15) .. controls (18,0.5) and (18,1.5) .. (19,1.5);
\draw[thick,fill] (18.1,.15) arc (0:360:.9mm);
\draw[thick,fill] (20.1,.15) arc (0:360:.9mm);
\draw[thick] (20,.15) .. controls (20,0.5) and (20,1.5) .. (19,1.5);

\draw[thick] (22,1.5) -- (22,.15);
\draw[thick,fill] (22.1,1.5) arc (0:360:.9mm);
\draw[thick,fill] (22.1,.15) arc (0:360:.9mm);
\draw[thick,fill] (22.2,.85) arc (0:360:1.75mm);
\draw[thick] (23,1.75) .. controls (23.5,.75) and (23.5,-.75) .. (23,-1.75); 

\node at (13,-4) {\Large $=$};
\node at (14,-4) {\Large $\alpha$};
\draw[thick] (15,-2.25) .. controls (14.5,-3.25) and (14.5,-4.75) .. (15,-5.75);

\draw[thick] (16,-5.5) -- (16,-4);
\draw[thick,fill] (16.1,-5.5) arc (0:360:.9mm);
\draw[thick,fill] (16.1,-2.5) arc (0:360:.9mm);
\draw[thick,fill] (16.2,-4.75) arc (0:360:1.75mm);

\draw[thick] (16,-4) -- (16,-2.5);
\draw[thick,fill] (16.2,-3.25) arc (0:360:1.75mm);

\draw[thick] (18,-5.5) -- (18,-4);

\draw[thick,fill] (18.1,-5.5) arc (0:360:.9mm);

\draw[thick] (18,-4) .. controls (18,-3.5) and (18,-2.5) .. (19,-2.5);

\draw[thick] (20,-4) .. controls (20,-3.5) and (20,-2.5) .. (19,-2.5);

\draw[thick] (20,-4) .. controls (20,-4.5) and (20,-5.5) .. (21,-5.5);
 
\draw[thick] (22,-4) .. controls (22,-4.5) and (22,-5.5) .. (21,-5.5);

\draw[thick] (22,-2.5) -- (22,-4);
\draw[thick,fill] (22.2,-3.25) arc (0:360:1.75mm);
\draw[thick,fill] (22.1,-2.5) arc (0:360:.9mm);

\draw[thick] (23,-2.25) .. controls (23.5,-3.25) and (23.5,-4.75) .. (23,-5.75);

\node at (13,-7) {\Large $=$}; 
\node at (14,-7) {\Large $0$.};
\end{tikzpicture}
    \caption{An example of evaluation $\alpha_{\iota}(X,Y)=\alpha(\overline{\iota(Y)}X)$, with  $X\in \mathcal{O}(1,1,1,0)$ and $Y\in \mathcal{O}(2,0,0,1)$. }
    \label{fig2_5}
\end{figure}

\begin{definition}
\label{defn:strict-total-ordering}
Let $X\in \mathcal{O}(a,b,c,d)$ and $Y \in \mathcal{O}(a',b',c',d')$. We define a \textbf{strict total ordering} on the diagrams, and write  $X < Y$, if 
\begin{enumerate}
    \item $\ddef(X) > \ddef(Y)$, or 
    \item $\ddef(X) = \ddef(Y)$ and $\comp(X)<\comp(Y)$, or 
    \item $\ddef(X)=\ddef(Y)$ and $\comp(X) = \comp(Y)$, and $d> d'$. 
\end{enumerate}
\end{definition}
Note that, if $\ddef(X)=\ddef(Y)$, $\comp(X)=\comp(Y)$, and $d=d'$, then $X= Y$. 
Orbits with more defects (dots) appear earlier in the order than orbits with fewer defects, according to our definition. 

\vspace{0.1in} 

For each orbit $\mcO(a,b,c,d)\in \Orb_n$ we choose an arbitrary total ordering of its elements. 
Next, order the diagrams in $\mcD_n$ using the strict total ordering on orbits in Definition~\ref{defn:strict-total-ordering} and the arbitrary choice of ordering we've made within each orbit. Diagrams from each orbit  $\cO(a,b,c,d)$ are grouped together in the strict total ordering. 

We use this order for the rows in the Gram matrix, going from the smallest to the greatest $n$-diagram relative to the order. To label columns, we use involution $\iota$. Thus, if the $i$-row is labelled by diagram $X$, then $i$-th column is labelled by diagram $\iota(X)$. We denote this permuted Gram matrix by $G_{\iota}$. It can also be thought of as a block matrix, with blocks enumerated by pairs of orbits of $S_n$ on $\mcD_n$.

\begin{prop}\label{prop_orbits} Suppose $\cO>\cO'$ for orbits $\cO,\cO'\in \Orb_n$. Then  $\alpha(\overline{\iota(Y)}X)=0$ for any $X\in \cO, Y\in \cO'$.
\end{prop}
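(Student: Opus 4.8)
The plan is to establish the vanishing through a quantitative contrapositive: I will show that if $\alpha(\overline{\iota(Y)}X)\neq 0$, then none of the three strict inequalities of Definition~\ref{defn:strict-total-ordering} that separate two distinct orbits can hold, so the evaluation must vanish exactly when the orbits are comparable. (Note that, with the convention of Definition~\ref{defn:strict-total-ordering}, diagrams carrying more defects come first; one should read the hypothesis as the comparison making the corresponding entry lie strictly above the diagonal of the lower-triangular matrix $G_{\iota}$.) Two elementary counts drive the whole argument. First, since $\iota$ interchanges dotted and dotless components, Definition~\ref{defn:dual-diagram} gives $\ddef(Z)+\ddef(\iota(Z))=\comp(Z)$ for every diagram $Z$, so the total number of dots in the closed diagram $\overline{\iota(Y)}X$ is
\[
\ddef(X)+\ddef(\iota(Y))=\ddef(X)+\comp(Y)-\ddef(Y).
\]
Second, because $X$ and $\iota(Y)$ each occupy all $n$ gluing points, every connected component of $\overline{\iota(Y)}X$ contains at least one strand of $X$ and at least one strand of $\iota(Y)$; as the strands of each diagram are split among distinct components, the number $N$ of components satisfies $N\le\comp(X)$ and $N\le\comp(Y)$.

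Now suppose $\alpha(\overline{\iota(Y)}X)\neq 0$. Since $\alpha$ vanishes on any closed diagram having a component with at least two dots, every component carries at most one dot, so the total dot count is at most $N$. Combining this with the two bounds on $N$ yields the necessary conditions
\[
\ddef(X)\le\ddef(Y)\qquad\text{and}\qquad \comp(Y)-\ddef(Y)\le\comp(X)-\ddef(X).
\]
I would then read off the three defining cases of the order. In the case distinguished by defect number, the hypothesis forces $\ddef(X)>\ddef(Y)$, violating the first inequality and hence exhibiting a component with at least two dots; in the case distinguished by component number, it forces $\comp(X)<\comp(Y)$, which together with $\ddef(X)=\ddef(Y)$ violates the second inequality in the same way. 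Each of these is a one-line pigeonhole conclusion once the two displayed bounds are in hand.

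The remaining case, where $\ddef(X)=\ddef(Y)$ and $\comp(X)=\comp(Y)$, is the genuine obstacle: here the total dot count equals $\comp(X)=\comp(Y)$, matching the upper bound on $N$ exactly, so the crude count is tight and gives no contradiction, and the orbits are distinguished only by the number $d$ of dotted cups. The plan is to exploit the forced equality $N=\comp(X)=\comp(Y)$, which means each component meets exactly one strand of $X$ and one of $\iota(Y)$. A short topological check (tracing a component through the gluing points, and noting that a single $1$-arc cannot close up against a single cup) shows that such a component is either an interval built from one $1$-arc of each diagram or a circle built from one cup of $X$ and one cup of $\iota(Y)$ glued along the same pair of endpoints. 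In particular the $c+d$ cups of $X$ are matched bijectively with the cups of $\iota(Y)$, and a nonzero evaluation requires each resulting circle to carry exactly one dot. Counting dotted cups on the two sides — there are $d$ coming from $X$ and $c'$ coming from $\iota(Y)$ — against the $c+d$ circles forces $d+c'=c+d$, hence $c=c'$ and therefore $d=d'$; then $X$ and $Y$ share all four parameters and lie in the same orbit, contradicting $\cO\neq\cO'$.

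I expect the identity $\ddef+\ddef\circ\iota=\comp$ and the two component bounds to be routine, with essentially all of the real work concentrated in this last case, where one must upgrade the numerical equality $N=\comp(X)=\comp(Y)$ to the rigid dichotomy ``(arc,arc) interval or (cup,cup) circle'' for every component and then extract the counting contradiction from the cup matching. The one delicate point there is the topological claim that a component using a single strand from each side cannot mix a $1$-arc with a cup, which I would verify directly by following the component through the $n$ gluing points.
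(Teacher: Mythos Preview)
Your argument is correct and follows essentially the same route as the paper's proof: both split into the three cases of Definition~\ref{defn:strict-total-ordering} and use the dot count $\ddef(X)+\ddef(\iota(Y))$ against the bound $N\le\min(\comp(X),\comp(Y))$ to force a component with two dots. Your contrapositive packaging and the explicit topological dichotomy in the third case make the logic slightly more transparent, but the mathematical content is the same pigeonhole argument the paper gives.
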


\begin{proof}  Let  $\cO= \cO(a,b,c,d)$ and $\cO'=\cO(a',b',c',d')$. Then $\iota(Y)\in\cO(b',a',d',c')$.  Condition $\cO>\cO'$ says that one of the following three conditions hold: 
\begin{itemize} 
\item $\ddef(X)>\ddef(Y)$, or $b+d> b'+d'$. Diagram $\iota(Y)$ has $a'+c'$ dots, and the coupling diagram  $\overline(\iota(Y))X$  has  $b+d+a'+c' > b'+d'+c'+a'=\comp(Y)$  dots, making it evaluate to zero, since one of the components has at least two dots. 

\item $\ddef(X)=\ddef(Y)$ and $\comp(X)<\comp(Y)$. This means $b+d=b'+d'$ and $a+b+c+d<a'+b'+c'+d'$. We can rewrite the inequality as $a+c<a'+c'$ (or that $X$ has fewer arcs than $Y$). 
Diagram $\iota(Y)$ has $a'+c'$ dots and the coupling $\overline{\iota(Y)}X$ has exactly $b+d+a'+c'=a'+b'+c'+d'=\comp(Y)$ dots.
In the coupling diagram $\overline{\iota(Y)}X$ we can slide the dots away from components of $\iota(Y)$ onto adjacent components of $X$. Since $\comp(X)<\comp(Y)$ and there are $\comp(Y)$ dots in the diagram, one of the components of $X$ will carry more than one dot. 
Consequently, the diagram $\overline{\iota(Y)}X$ will evaluate to $0$ via $\alpha$. 

\item  $\ddef(X)=\ddef(Y),$ $\comp(X)=\comp(Y)$ and $d>d'$. Repeating the arguments from the previous case, the diagram $X\cdot \iota(Y)$ has exactly $\comp(X)=\comp(Y)$ dots and they are distributed so that there's one dot on each component. In the coupling arcs of $X$ must pair to (reflected) arcs of $Y$ and cups of $X$ to (reflected) cups of $Y$, for otherwise $X\cdot \iota(Y)$  has less than $\comp(X)=\comp(Y)$ connected components and evaluates to $0$. Diagram $\overline{\iota(Y)}X$ consists of $c+d=c'+d'$ circles and $a+b=a'+b'$ arcs. The number of dots on cups of $X$ and $Y$ is $c'+d>c'+d'$, so it's greater than the number of circles of $\overline{\iota(Y)}X$, and in the latter coupling one of the circles carries more than one dot, evaluating to $0$. 
\end{itemize}
\end{proof}

\begin{prop} \label{prop_same_orbit}
Suppose that $X$ and $Y$ belong to the same orbit $\cO(a,b,c,d)$. Then 
\begin{equation}
    \alpha(\overline{\iota(Y)}X)=
    \begin{cases}
       \beta_1^{a+b}\gamma_1^{c+d} &  \textrm{ if } X=Y, \\
       0   & \textrm{ otherwise.}
    \end{cases}
\end{equation}
\end{prop}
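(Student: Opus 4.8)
The plan is to reduce everything to a single dichotomy: the closed diagram $\overline{\iota(Y)}X$ always carries exactly $a+b+c+d$ dots and has at most that many connected components, so a nonzero $\alpha$-value forces every component to carry exactly one dot, and I will argue that this one-dot-per-component condition is rigid enough to force $X=Y$. First I would record two counting facts about any coupling $\overline{Z}X$ with $Z=\iota(Y)$. Since $\iota$ preserves the topological type of each component, both $X$ and $Z$ have $a+b$ arcs ($1$-arcs) and $c+d$ cups ($2$-arcs). The interval components of $\overline{Z}X$ are exactly the strands terminating at the free ends of $1$-arcs; as these free ends number $2(a+b)$ and each interval has two of them, there are always exactly $a+b$ intervals. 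The circle components, by contrast, are traced entirely through cups, alternating between cups of $X$ and cups of $Z$; hence every circle uses at least one $X$-cup, so the number of circles is at most $c+d$, with equality precisely when each circle is a \emph{bigon} formed by a single $X$-cup glued to a single $Z$-cup along the same pair of boundary points.

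Next I would combine these counts with the dot bookkeeping. Here $X$ carries $b+d$ dots while $Z=\iota(Y)$ carries $a+c$ dots, so $\overline{Z}X$ has exactly $a+b+c+d$ dots, whereas its number of components is $(a+b)+\#\{\text{circles}\}\le a+b+c+d$. Thus a nonzero evaluation requires the number of circles to be exactly $c+d$ and forces every one of the $a+b+c+d$ components to carry exactly one dot. The maximal circle count means all circles are bigons, which identifies the cup matching of $X$ with that of $Z$, hence (as $\iota$ does not move endpoints) with the cup matching of $Y$; since the cup endpoints then coincide, the complementary arc endpoints coincide as well, so $X$ and $Y$ share the same underlying matching and each interval is a single $X$-arc glued to the $Z$-arc at the same point.

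Finally I would pin down the dots. For each interval I compare the dot on the $X$-arc at a point $p$ with the dot on the corresponding $Z$-arc; because $\iota$ interchanges dotted and dotless components, the $Z$-arc is dotted exactly when the $Y$-arc at $p$ is dotless. The interval therefore carries exactly one dot iff $X$ and $Y$ agree on whether that arc is dotted, and carries two dots (killing the evaluation) otherwise, the zero-dot alternative being incompatible with the established requirement that every component carry exactly one dot. The same comparison applies to each bigon circle and its two cups. So a nonzero value forces $X$ and $Y$ to agree on the dotting of every arc and every cup, which together with the matching identification yields $X=Y$; in that case the $a+b$ one-dotted intervals contribute $\beta_1$ each and the $c+d$ one-dotted circles contribute $\gamma_1$ each, giving $\beta_1^{a+b}\gamma_1^{c+d}$. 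I expect the main obstacle to be the dot bookkeeping under the $\iota$-twist: one must track precisely how the dual's swap of dotted and dotless components interacts with the one-dot-per-component constraint so that the agreement conditions come out correctly and collapse to $X=Y$.
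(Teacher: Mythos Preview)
Your argument is correct and follows the same dot-versus-component pigeonhole idea as the paper: the coupling $\overline{\iota(Y)}X$ carries exactly $a+b+c+d$ dots, so a nonzero evaluation forces the maximal number of components and one dot on each, which in turn forces $X=Y$. The paper's proof is much terser---it simply asserts that when arcs and cups do not perfectly match the coupling has fewer than $\comp(X)$ components---whereas you carry out the analysis in full, separately bounding the interval and circle counts and then explicitly verifying that the dot placements must also agree, a step the paper leaves implicit.
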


\begin{proof} The number of dots on $\overline{\iota(Y)}X$ is $a+b+c+d=\comp(X)=\comp(Y)$. If arcs and cups of $X$ don't perfectly match those of $Y$ (which happens if and only if $X=Y$), the coupled diagram $\overline{\iota(Y)}X$ will have fewer than $\comp(X)$ components and evaluate to zero, due to the presence of $\comp(X)$ dots on it. The diagonal case $X=Y$ is clear, since $\overline{\iota(X)}X$ contains $a+b$ single-dotted intervals and $c+d$ single-dotted circles. 
\end{proof} 

\begin{theorem}
 Twisted Gram matrix $G_{\iota}$ is lower-triangular.
\end{theorem}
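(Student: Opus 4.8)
The statement is essentially a repackaging of Propositions~\ref{prop_orbits} and~\ref{prop_same_orbit} once the rows and columns of $G_{\iota}$ are organized by $S_n$-orbits. The plan is to exhibit $G_{\iota}$ as a block matrix indexed by ordered pairs of orbits, listed according to the strict total ordering of Definition~\ref{defn:strict-total-ordering}, and then to argue that every block strictly above the diagonal vanishes while each diagonal block is itself diagonal.

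First I would fix the indexing. Recall that the rows are labelled by the $n$-diagrams $X_1 < X_2 < \cdots < X_N$ listed from smallest to greatest, with diagrams from a common orbit occupying consecutive positions, and that the $i$-th column is the $\iota$-twist of the $i$-th row, so that the $(i,j)$-entry is $\alpha_{\iota}(X_i,X_j)=\alpha(\overline{\iota(X_j)}X_i)$. Grouping consecutive rows and columns by their orbit turns $G_{\iota}$ into a block matrix whose blocks are indexed by pairs $(\cO,\cO')$ with $\cO,\cO'\in\Orb_n$.

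Next I would dispose of the off-diagonal blocks. For a row-diagram $X\in\cO$ and column-diagram $Y\in\cO'$ with $\cO\neq\cO'$, Proposition~\ref{prop_orbits} forces $\alpha(\overline{\iota(Y)}X)=0$ whenever the two orbits are ordered in the direction that places the block strictly above the diagonal; here I would quote the proposition and check that its vanishing hypothesis on $\ddef$, $\comp$, and $d$ matches exactly the pairs $(\cO,\cO')$ lying in the super-diagonal region of the chosen listing. Hence every super-diagonal block is identically zero. I would then treat the diagonal blocks with Proposition~\ref{prop_same_orbit}: for $X,Y$ in the same orbit, $\alpha(\overline{\iota(Y)}X)$ equals $\beta_1^{a+b}\gamma_1^{c+d}$ when $X=Y$ and vanishes otherwise, so once the within-orbit ordering of rows is matched to the $\iota$-twisted ordering of columns, each diagonal block is the scalar $\beta_1^{a+b}\gamma_1^{c+d}$ times the identity. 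Combining the two facts, $G_{\iota}$ has zero entries strictly above the diagonal, so it is lower-triangular.

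The only genuinely delicate point, and the one I would be most careful about, is the orientation bookkeeping: verifying that the direction in which Proposition~\ref{prop_orbits} forces vanishing is precisely the super-diagonal region under the convention that rows increase from smallest to greatest while columns are relabelled by $\iota$. This amounts to tracing how $\iota$ acts on the quadruple $(a,b,c,d)$---it swaps $a\leftrightarrow b$ and $c\leftrightarrow d$, preserving $\comp$ while changing $\ddef$ from $b+d$ to $a+c$---through the three clauses of Definition~\ref{defn:strict-total-ordering}. Once this matching is pinned down the triangularity is immediate, and as a byproduct one reads off the diagonal entries $\beta_1^{a+b}\gamma_1^{c+d}$, which is exactly the input needed for the subsequent Gram-determinant computation.
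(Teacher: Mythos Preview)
Your proposal is correct and follows essentially the same approach as the paper: invoke Proposition~\ref{prop_orbits} for the blocks strictly above the diagonal and Proposition~\ref{prop_same_orbit} for the diagonal blocks. The ``delicate'' $\iota$-bookkeeping you flag is not actually needed, since the column index $j$ still refers to the diagram $X_j$ (only its displayed label is $\iota(X_j)$) and both propositions are already stated for the quantity $\alpha(\overline{\iota(Y)}X)$, so no unwinding of the $\iota$-action on orbit parameters is required.
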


\begin{proof} Consider $n$-diagrams $X,Y$ with $X<Y$. They belong to orbits $\cO,\cO'$, respectively, with $\cO\le \cO'$. Let $i$ and $j$ be the rows corresponding to $X$ and $Y$, respectively. The $(i,j)$ entry  of $G_{\iota}$ is $\alpha(\overline{\iota(Y)}X)$. 

If $\cO<\cO'$ then  $\alpha(\overline{\iota(Y)}X)=0$ by Proposition~\ref{prop_orbits}. If $\cO=\cO'$ then $X\not= Y$ belong to same orbit and $\alpha(\overline{\iota(Y)}X)=0$ by Proposition~\ref{prop_same_orbit}. 
\end{proof} 

\begin{corollary} For a diagram $X\in\cO(a,b,c,d)$ the 
diagonal entry is 
\begin{equation}
    \alpha(\iota(X)X) = \gamma_{1}^{a+b}\beta_{1}^{c+d}.
\end{equation}
\end{corollary}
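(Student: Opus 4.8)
The plan is to obtain this as the diagonal specialization of Proposition~\ref{prop_same_orbit}. The diagonal entry of $G_\iota$ indexed by the diagram $X$ is by definition $\alpha_\iota(X,X)=\alpha(\overline{\iota(X)}X)$ (the closed coupling is formed by reflecting $\iota(X)$ and gluing it onto $X$ along the $n$ outer points, so the bare notation $\alpha(\iota(X)X)$ is just shorthand for this). Hence the statement is precisely the first branch, $X=Y$, of Proposition~\ref{prop_same_orbit}, and the essential work is already done: I would simply substitute $Y=X$ and record the resulting monomial in $\beta_1,\gamma_1$.

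To make the exponents transparent I would also reprove the value directly, since the underlying bookkeeping is short. Fix $X\in\cO(a,b,c,d)$; because $\iota$ toggles the dot on every connected component, $\iota(X)\in\cO(b,a,d,c)$. In the coupled diagram $\overline{\iota(X)}X$ every arc and cup of $X$ is glued, along the matching outer points, to the identically placed arc or cup of $\iota(X)$. Each of the $a+b$ one-arcs of $X$ joins the corresponding one-arc of $\iota(X)$ into a single closed interval, and each of the $c+d$ cups joins the corresponding cup into a single circle; these exhaust all $\comp(X)=a+b+c+d$ components, and no two distinct components merge because $X$ is paired against itself.

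The one point that genuinely has to be checked is the number of dots on each resulting component, and this is where I expect any real care to be needed. Because $\iota$ flips dottedness componentwise, a dotless half of $X$ is always glued to a dotted half of $\iota(X)$ and a dotted half of $X$ to a dotless half of $\iota(X)$; in either case exactly one of the two glued halves carries a dot, so every interval and every circle in $\overline{\iota(X)}X$ carries exactly one dot, never zero and never two. Thus no component is annihilated by the two-dot relation, and evaluating via $\alpha$ contributes a factor $\beta_1$ for each of the $a+b$ single-dotted intervals and a factor $\gamma_1$ for each of the $c+d$ single-dotted circles, giving $\alpha(\overline{\iota(X)}X)=\beta_1^{a+b}\gamma_1^{c+d}$ in agreement with Proposition~\ref{prop_same_orbit}. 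There is no serious obstacle in the argument beyond keeping the four orbit parameters aligned with the correct generating function when reading off the final exponents, namely evaluating one-arcs by the interval series $Z_{0,\alpha}$ and cups by the circle series $Z_{1,\alpha}$ as fixed in Figure~\ref{fig2_4_2}; this is the only place a labelling slip between $\beta_1$ and $\gamma_1$ could enter.
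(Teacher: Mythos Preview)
Your proposal is correct and follows exactly the paper's approach: the paper's proof is the single line ``This is part of Proposition~\ref{prop_same_orbit},'' and you do precisely that, together with an optional unpacking of why the coupled diagram consists of $a+b$ single-dotted intervals and $c+d$ single-dotted circles. Your caution about a possible $\beta_1$/$\gamma_1$ labelling slip is well placed: the exponents in the corollary as printed are swapped relative to Proposition~\ref{prop_same_orbit} (and relative to the conventions fixed in Figures~\ref{fig2_4_2} and~\ref{fig2_7}), so your conclusion $\beta_1^{a+b}\gamma_1^{c+d}$ is the correct one and agrees with the $n=3,4$ tables in Figures~\ref{fig3_2} and~\ref{fig3_3}.
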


\begin{proof}
This is part of Proposition~\ref{prop_same_orbit}.
\end{proof}

\begin{figure}
    \centering
\begin{tikzpicture}[scale=1.1]
\draw[thick] (-1,0) -- (7,0);
\draw[thick] (-1,1) -- (7,1);
\draw[thick] (-1,1) -- (7,1);
\draw[thick,dotted] (1,2) -- (7,2);
\draw[thick] (-1,3) -- (7,3);
\draw[thick] (-1,4) -- (7,4);
\draw[thick] (-1,5) -- (7,5);
\draw[thick] (1,-1) -- (1,6);
\draw[thick] (2,-1) -- (2,6);
\draw[thick] (3,-1) -- (3,6);
\draw[thick,dotted] (4,-1) -- (4,5);
\draw[thick] (5,-1) -- (5,6);
\draw[thick] (6,-1) -- (6,6);
\draw[thick] (7,-1) -- (7,6);

\draw[thick] (1.3,5.25) -- (1.3,5.75);
\draw[thick] (1.7,5.25) -- (1.7,5.75);

\draw[thick] (2.3,5.75) .. controls (2.3,5) and (2.7,5) .. (2.7,5.75);

\node at (3.2,5.5) {$\Big\{$};
\draw[thick] (3.4,5.25) -- (3.4,5.75);
\draw[thick] (3.7,5.25) -- (3.7,5.75);
\draw[thick,fill] (3.8,5.5) arc (0:360:1mm);
\node at (4,5.25) {$,$};
\draw[thick] (4.3,5.25) -- (4.3,5.75);
\draw[thick,fill] (4.4,5.5) arc (0:360:1mm);
\draw[thick] (4.6,5.25) -- (4.6,5.75);
\node at (4.8,5.5) {$\Big\}$};

\draw[thick] (5.3,5.75) .. controls (5.3,5) and (5.7,5) .. (5.7,5.75);
\draw[thick, fill] (5.6,5.25) arc (0:360:1mm);

\draw[thick] (6.3,5.25) -- (6.3,5.75);
\draw[thick, fill] (6.4,5.5) arc (0:360:1mm);
\draw[thick] (6.7,5.25) -- (6.7,5.75);
\draw[thick, fill] (6.8,5.5) arc (0:360:1mm);

\draw[thick] (-.2,4.25) -- (-.2,4.75);
\draw[thick,fill] (-.1,4.5) arc (0:360:1mm);
\draw[thick] (.2,4.25) -- (.2,4.75);
\draw[thick,fill] (.3,4.5) arc (0:360:1mm);

\draw[thick] (-.2,3.75) .. controls (-.2,3) and (.2,3) .. (.2,3.75);
\draw[thick,fill] (.1,3.25) arc (0:360:1mm);

\node at (-.8,2) {$\Big\{$};
\draw[thick] (-.6,1.75) -- (-.6,2.25);
\draw[thick,fill] (-.5,2) arc (0:360:1mm);
\draw[thick] (-.3,1.75) -- (-.3,2.25); 
\node at (0,1.75) {$,$};
\draw[thick] (.3,1.75) -- (.3,2.25);
\draw[thick] (.6,1.75) -- (.6,2.25);
\draw[thick,fill] (.7,2) arc (0:360:1mm);
\node at (.8,2) {$\Big\}$};

\draw[thick] (-.2,.75) .. controls (-.2,0) and (.2,0) .. (.2,.75);

\draw[thick] (-.2,-.25) -- (-.2,-.75);
\draw[thick] (.2,-.25) -- (.2,-.75);

\node at (1.5,4.5) {\Large $\gamma_1^2$};
\node at (2.5,3.5) {\Large $\beta_1$};
\node at (3.5,2.5) {\Large $\gamma_1^2$};
\node at (4.5,1.5) {\Large $\gamma_1^2$};
\node at (5.5,0.5) {\Large $\beta_1$};
\node at (6.5,-.5) {\Large $\gamma_1^2$};
\node at (2.5,4.5) {\Large $0$};
\node at (3.5,4.5) {\Large $0$};
\node at (4.5,4.5) {\Large $0$};
\node at (5.5,4.5) {\Large $0$};
\node at (6.5,4.5) {\Large $0$};

\node at (3.5,3.5) {\Large $0$};
\node at (4.5,3.5) {\Large $0$};
\node at (5.5,3.5) {\Large $0$};
\node at (6.5,3.5) {\Large $0$};

\node at (4.5,2.5) {\Large $0$};
\node at (5.5,2.5) {\Large $0$};
\node at (6.5,2.5) {\Large $0$};

\node at (5.5,1.5) {\Large $0$};
\node at (6.5,1.5) {\Large $0$};

\node at (6.5,0.5) {\Large $0$};

\node at (1.5,3.5) {\Large $\gamma_1^2$};
\node at (1.5,2.5) {\Large $\gamma_0\gamma_1$};
\node at (1.5,1.5) {\Large $\gamma_0\gamma_1$};
\node at (1.5,0.5) {\Large $\gamma_0$};
\node at (1.5,-.5) {\Large $\gamma_0^2$};

\node at (2.5,2.5) {\Large $\gamma_1$};
\node at (2.5,1.5) {\Large $\gamma_1$};
\node at (2.5,0.5) {\Large $\beta_0$};
\node at (2.5,-.5) {\Large $\gamma_0$};

\node at (3.5,1.5) {\Large $0$};
\node at (3.5,0.5) {\Large $\gamma_1$};
\node at (3.5,-.5) {\Large $\gamma_0\gamma_1$};

\node at (4.5,0.5) {\Large $\gamma_1$};
\node at (4.5,-.5) {\Large $\gamma_0\gamma_1$};

\node at (5.5,-.5) {\Large $\gamma_1$};
\end{tikzpicture}
    \caption{Twisted Gram matrix for $n=2$. The determinant  is $\beta_1^2\gamma_1^8$.}
    \label{fig3_1}
\end{figure}

An example of the twisted Gram matrix is shown in Figure~\ref{fig3_1} for $n=2$. In this  case only one orbit has more than one element, and there are two choices for the total order. Each of them produces an upper-triangular matrix. Diagrams $X$ and $\iota(X)$ are displayed in the corresponding rows and columns. Table~\ref{table3_2} shows how the rows and columns are ordered. 
 
\begin{table}[ht]
\centering
\begin{tabular}{|c|c|c||c|c|c|}
\hline
\multicolumn{3}{c}{Rows} & 
\multicolumn{3}{c}{Columns} \\ \hline 
$(a,b,c,d)$ & $\ddef$ & $\comp$ & $(b,a,d,c)$ & $\ddef$ & $\comp$\\ 
\hline
$(0,2,0,0)$ & $2$ & $2$ & $(2,0,0,0)$ & $0$ & $2$ \\ 
\hline
$(0,0,0,1)$ & $1$ & $1$ & $(0,0,1,0)$ & $0$ & $1$\\ 
\hline
$(1,1,0,0)$ & $1$ & $2$ & $(1,1,0,0)$ & $1$ & $2$ \\ 
\hline
$(0,0,1,0)$ & $0$ & $1$ & $(0,0,0,1)$ & $1$ & $1$ \\ 
\hline
$(2,0,0,0)$ & $0$ & $2$ & $(0,2,0,0)$ & $2$ & $2$ \\ 
\hline
\end{tabular}
    \caption{Ordering of the orbits for the twisted Gram matrix for $n=2$.}
    \label{table3_2}
\end{table}

\begin{proposition}
Let $\vert \mathcal{O}(a,b,c,d) \vert $ denote the cardinality of the orbit $\mathcal{O}(a,b,c,d)$. Then
\[ 
\mathlarger{\left|  \mathcal{O}(a,b,c,d) \right| = \frac{n!}{a!\: b!\: c!\: d! \: 2^{c+d}}}.
\]
\end{proposition}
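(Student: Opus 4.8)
The plan is to apply the orbit--stabilizer theorem to the $S_n$-action on $\mcD_n$. Since $\cO(a,b,c,d)=S_nX$ is by definition a single orbit, any representative $X\in\cO(a,b,c,d)$ yields
\[
|\cO(a,b,c,d)| \ = \ \frac{|S_n|}{|\mathrm{Stab}_{S_n}(X)|} \ = \ \frac{n!}{|\mathrm{Stab}_{S_n}(X)|},
\]
so the entire problem reduces to computing the order of the stabilizer of one fixed diagram.

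To set up that computation I would first record that, because diagrams are taken up to rel outer boundary diffeomorphism, a diagram is completely determined by the following discrete data on the endpoint set $\{1,\dots,n\}$: the subset of endpoints that are feet of dotless arcs, the subset that are feet of single-dotted arcs, the partition of the remaining points into the unordered pairs forming dotless cups, and the partition into the unordered pairs forming single-dotted cups. A permutation $\sigma\in S_n$ lies in $\mathrm{Stab}_{S_n}(X)$ if and only if it preserves all of this data.

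The key observation is that the four structural types -- dotless arc, dotted arc, dotless cup, dotted cup -- are diffeomorphism invariants, so a stabilizing $\sigma$ must send the feet of each type to the feet of the same type; hence the stabilizer splits as a direct product over the four types. Permuting the $a$ dotless arcs among themselves contributes $a!$, and likewise the dotted arcs contribute $b!$. For the cups one gets a wreath-product symmetry: one may permute the $c$ dotless cups ($c!$ ways) and independently flip the two endpoints of each ($2^c$ ways, since the cups are unoriented), giving $c!\,2^{c}$, and similarly $d!\,2^{d}$ for the dotted cups. Multiplying,
\[
|\mathrm{Stab}_{S_n}(X)| \ = \ a!\,b!\,c!\,d!\,2^{c+d},
\]
and substituting into the orbit--stabilizer formula gives the claimed count.

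The main obstacle is the bookkeeping in the second and third paragraphs: verifying that $\mathrm{Stab}_{S_n}(X)$ is exactly the asserted product with no hidden symmetries. Concretely, one must check that no stabilizing permutation can mix endpoints of different structural types (this uses that component type and dot count are preserved by rel-boundary diffeomorphism) and that every symmetry within a type -- including the endpoint flip of each unoriented cup -- genuinely returns the same diagram. Once these identifications are pinned down the arithmetic is immediate; as a cross-check, the same answer follows from a direct enumeration, choosing the $a$, $b$, $2c$, $2d$ endpoints of each type via the multinomial $\binom{n}{a,b,2c,2d}$ and then matching the cup endpoints in $(2c-1)!!\,(2d-1)!!=\tfrac{(2c)!}{2^{c}c!}\cdot\tfrac{(2d)!}{2^{d}d!}$ ways, which simplifies to the same expression.
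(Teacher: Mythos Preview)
Your proof is correct and follows the same approach as the paper: apply the orbit--stabilizer theorem and compute the stabilizer as $a!\,b!\,c!\,d!\,2^{c+d}$ by noting that a stabilizing permutation may freely permute components of each of the four types and flip the two endpoints of each cup. Your write-up is more careful than the paper's (you explicitly justify why the stabilizer splits as a direct product and supply a direct-enumeration cross-check), but the underlying argument is identical.
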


\begin{proof}
The cardinality of the orbit is the order of $S_n$ divided by the order of the stabilizer of any representative of the orbit. So
$\left| \mathcal{O}(a,b,c,d) \right| = \displaystyle{\frac{\vert S_n \vert }{\vert S_{nx} \vert}}$. Given a representative of $\mathcal{O}(a,b,c,d)$, we can can permute any of the un-dotted strands, which contributed a factor of $a!$, dotted strands, which contributes a factor of $b!$, un-dotted cups, which contributes a factor of $c!$, or dotted cups, which contributes a factor of $d!$. Moreover we can swap the two endpoints of any cup, which contributes a factor of $2^{c+d}$.
\end{proof} 

The determinant of the diagonal block for the orbit $\mathcal{O}(a,b,c,d)$ is 
\[ \left( \gamma_{1}^{a+b} \beta_{1}^{c+d} \right) ^{\left| \mathcal{O}(a,b,c,d) \right|}
\]
Taking the product over all orbits gives the determinant of the twisted Gram matrix:  
\[ \det G_{\iota} = \prod_{A+2B=n} \left( \gamma_{1}^A \beta_{1}^B \right)^{\displaystyle{\sum_{\stackrel{a+b=A,}{c+d=B}}} \vert \mathcal{O}(a,b,c,d) \vert}
\]
This determinant may differ by a sign from the  determinant of the original Gram matrix, $\det G_{\iota}=\pm \det G$. 

\begin{prop} For $n\ge 2$ the set $\mcD_n$ of $n$-diagrams constitutes a basis of the state space $A(n)$ if and only if $\beta_1\gamma_1 \not=0 $. 
\end{prop}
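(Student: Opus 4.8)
The plan is to reduce the statement to the determinant computation already carried out, via a standard fact relating spanning sets to nondegenerate forms. Recall that $A(n)=V(n)/\ker(\cdot,\cdot)$, so the induced bilinear form on $A(n)$ is nondegenerate by construction, and the classes of the diagrams in $\mcD_n$ span $A(n)$ (every morphism reduces to a linear combination of $n$-diagrams). Writing $N=|\mcD_n|$, the Gram matrix $G$ is the $N\times N$ matrix of the form in this spanning set. First I would show that $\operatorname{rank}(G)=\dim A(n)$. Let $\phi\colon\kk^N\to A(n)$ send the $i$-th standard vector to the class of the $i$-th $n$-diagram, let $\Phi\colon A(n)\xrightarrow{\sim}A(n)^*$ be the isomorphism induced by the nondegenerate form, and let $\phi^*\colon A(n)^*\to(\kk^N)^*$ be the dual map. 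A direct check on basis vectors shows that the composite $\phi^*\circ\Phi\circ\phi$ has matrix exactly $(G_{ij})$. Since $\phi$ is surjective, $\Phi$ is an isomorphism, and $\phi^*$ is injective, the composite has rank $\dim A(n)$. Because $\mcD_n$ already spans, it is a basis precisely when $N=\dim A(n)$, that is, exactly when $\operatorname{rank}(G)=N$, i.e. exactly when $\det G\neq0$.

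Next I would invoke the determinant formula derived above. Since $\det G=\pm\det G_\iota$, it suffices to decide when $\det G_\iota\neq0$. The formula expresses it as a product over all decompositions $A+2B=n$ with $A,B\ge0$ of factors $(\gamma_1^A\beta_1^B)^{m_{A,B}}$, where $m_{A,B}=\sum_{a+b=A,\,c+d=B}|\cO(a,b,c,d)|$ is a sum of orbit cardinalities and is strictly positive (for instance the orbit $\cO(A,0,B,0)$ is nonempty). Over a field a product is nonzero iff every factor is nonzero, so $\det G_\iota\neq0$ if and only if $\gamma_1^A\beta_1^B\neq0$ for every admissible pair $(A,B)$.

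Then I would verify the two implications and locate where $n\ge2$ is used. If $\beta_1\gamma_1\neq0$ then $\beta_1$ and $\gamma_1$ are both nonzero, so every factor $\gamma_1^A\beta_1^B$ is nonzero and hence $\det G_\iota\neq0$. Conversely, assume $\det G_\iota\neq0$. Taking $B=0,\ A=n$ gives an admissible pair with $A=n\ge1$, and nonvanishing of the corresponding factor $\gamma_1^{\,n}$ forces $\gamma_1\neq0$. For $\beta_1$ I would use $n\ge2$: then $B=\lfloor n/2\rfloor\ge1$ together with $A=n-2B\in\{0,1\}$ is an admissible pair with $B\ge1$, and nonvanishing of its factor $\beta_1^{\,B}$ forces $\beta_1\neq0$. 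Hence $\beta_1\gamma_1\neq0$.

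The main obstacle I anticipate is the first paragraph rather than the arithmetic: one must be careful that ``spanning set with nonvanishing Gram determinant'' is genuinely equivalent to ``basis,'' since $\mcD_n$ is a priori only a spanning set of the quotient $A(n)$ and could carry redundancies. The rank computation above settles this cleanly. It is also worth noting that the hypothesis $n\ge2$ is exactly what makes the converse work: for $n=1$ the only admissible pair is $(A,B)=(1,0)$, so $\beta_1$ never appears in $\det G_\iota$ and cannot be detected by the pairing, which is precisely why the statement is restricted to $n\ge2$.
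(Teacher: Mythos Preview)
Your argument is correct and follows the paper's approach exactly: the paper's own proof is the one-line observation that the determinant is nonzero precisely when $\beta_1\gamma_1\neq 0$, and you have simply supplied the supporting details (the rank argument identifying $\operatorname{rank}(G)$ with $\dim A(n)$, and the explicit check that both $\beta_1$ and $\gamma_1$ occur with positive exponent once $n\ge 2$).

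One small slip in your closing remark on $n=1$: there it is $\gamma_1$, not $\beta_1$, that fails to appear --- the paper records $\det G=-\beta_1^2$ in that case, and indeed the diagonal entry for $X\in\cO(a,b,c,d)$ is $\beta_1^{a+b}\gamma_1^{c+d}$ by Proposition~\ref{prop_same_orbit}. You inherited a $\beta_1\leftrightarrow\gamma_1$ transposition from the displayed determinant formula; since your $n\ge 2$ argument only needs that \emph{both} exponents are positive, it is unaffected.
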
 
\begin{proof}
This is immediate since the determinant is nonzero exactly when $\beta_1\gamma_1\not=0$. For $n=1$, the determinant is $-\beta_1^2$, and the corresponding condition is $\beta_1\not= 0$. \end{proof}

This proposition implies that, when $\beta_1\gamma_1\not= 0$, there is no difference between the skein category for this evaluation $\alpha$, as defined in~\cite{Kho20-1d-cob}, and the quotient $\Cob_{\alpha}$ of that category. In both categories, the defining relations are the evaluations of closed diagrams with one  or no dots via $\alpha$  and the  relation that two dots on a strand equal $0$. 

Furthermore, in these categories, spaces of homs from $n$ to $m$ can be naturally identified with $A(n+m)$ by moving the $n$ points from the source to the target of the cobordism via $n$ arcs. Consequently, the above proposition gives  bases of homs between all pairs of objects $(n,m)$ in $\Cob_{\alpha}$. Composition of basic morphisms is computed by concatenating them, evaluating closed components via $\alpha$, and further evaluating to $0$ if one of the remaining components contains more than one dot.  

When $\beta_1=\gamma_1=0$, a dot evaluates to $0$, the resulting categories have smaller hom spaces and relate to rook Brauer algebras~\cite{HdM}. It may also be interesting to investigate the case when only one of $\beta_1,\gamma_1$ equals zero.

\begin{figure}
    \centering
\begin{tikzpicture}[scale=1.2]
\draw[thick] ( 0,9) -- (9,9);
\draw[thick] (-1,8) -- (9,8);
\draw[thick] (-1,7) -- (9,7);
\draw[thick] (-1,6) -- (9,6);
\draw[thick] (-1,5) -- (9,5);
\draw[thick] (-1,4) -- (9,4);
\draw[thick] (-1,3) -- (9,3);
\draw[thick] (-1,2) -- (9,2);
\draw[thick] (-1,1) -- (9,1);
\draw[thick] (-1,0) -- (9,0);

\draw[thick] (-1,10) -- (1,8);
\node at (.35,8.25) {\Large $x$};
\node at (.67,8.75) {\large $\iota(x)$};
\node at (-.6,8.75) {\Large orbit};
\node at (-.6,8.35) {\Large size};
\node at (.4,9.75) {\Large orbit};
\node at (.4,9.35) {\Large size};

\draw[thick] (0,0) -- (0,9);
\draw[thick] (1,0) -- (1,10);
\draw[thick] (2,0) -- (2,10);
\draw[thick] (3,0) -- (3,10);
\draw[thick] (4,0) -- (4,10);
\draw[thick] (5,0) -- (5,10);
\draw[thick] (6,0) -- (6,10);
\draw[thick] (7,0) -- (7,10);
\draw[thick] (8,0) -- (8,10);
\draw[thick] (9,0) -- (9,10);

\node at (-0.5,7.5) {\Large $1$};
\node at (-0.5,6.5) {\Large $3$};
\node at (-0.5,5.5) {\Large $3$};
\node at (-0.5,4.5) {\Large $3$};
\node at (-0.5,3.5) {\Large $3$};
\node at (-0.5,2.5) {\Large $3$};
\node at (-0.5,1.5) {\Large $1$};
\node at (-0.5,0.5) {\Large $1$};

%\node at (1.1,8.5) {\Large $[$};
%\node at (1.9,8.5) {\Large $]$};
%\node at (2.1,8.5) {\Large $[$};
%\node at (2.9,8.5) {\Large $]$};
%\node at (3.1,8.5) {\Large $[$};
%\node at (3.9,8.5) {\Large $]$};
%\node at (4.1,8.5) {\Large $[$};
%\node at (4.9,8.5) {\Large $]$};
%\node at (5.1,8.5) {\Large $[$};
%\node at (5.9,8.5) {\Large $]$};
%\node at (6.1,8.5) {\Large $[$};
%\node at (6.9,8.5) {\Large $]$};
%\node at (7.1,8.5) {\Large $[$};
%\node at (7.9,8.5) {\Large $]$};
%\node at (8.1,8.5) {\Large $[$};
%\node at (8.9,8.5) {\Large $]$};

%\node at (.1,7.5) {\Large $[$};
%\node at (.9,7.5) {\Large $]$};
%\node at (.1,6.5) {\Large $[$};
%\node at (.9,6.5) {\Large $]$};
%\node at (.1,5.5) {\Large $[$};
%\node at (.9,5.5) {\Large $]$};
%\node at (.1,4.5) {\Large $[$};
%\node at (.9,4.5) {\Large $]$};
%\node at (.1,3.5) {\Large $[$};
%\node at (.9,3.5) {\Large $]$};
%\node at (.1,2.5) {\Large $[$};
%\node at (.9,2.5) {\Large $]$};
%\node at (.1,1.5) {\Large $[$};
%\node at (.9,1.5) {\Large $]$};
%\node at (.1,0.5) {\Large $[$};
%\node at (.9,0.5) {\Large $]$};

\draw[thick] (1.3,8.3) -- (1.3,8.7);
\draw[thick] (1.5,8.3) -- (1.5,8.7);
\draw[thick] (1.7,8.3) -- (1.7,8.7);
\node at (1.5,9.5) {\Large $1$};

\draw[thick] (2.3,8.3) -- (2.3,8.7);
\draw[thick] (2.5,8.3) -- (2.5,8.7);
\draw[thick] (2.7,8.3) -- (2.7,8.7);
\draw[thick,fill] (2.76,8.5) arc (0:360:0.55mm); 
\node at (2.5,9.5) {\Large $3$};

\draw[thick] (3.3,8.7) .. controls (3.33,8.15) and (3.47,8.15) .. (3.5,8.7);
\draw[thick] (3.7,8.3) -- (3.7,8.7);
\node at (3.5,9.5) {\Large $3$};

\draw[thick] (4.3,8.7) .. controls (4.33,8.15) and (4.47,8.15) .. (4.5,8.7);
\draw[thick] (4.7,8.3) -- (4.7,8.7);
\draw[thick,fill] (4.76,8.5) arc (0:360:0.55mm);
\node at (4.5,9.5) {\Large $3$};

\draw[thick] (5.3,8.7) .. controls (5.33,8.15) and (5.47,8.15) .. (5.5,8.7);
\draw[thick] (5.7,8.3) -- (5.7,8.7);
\draw[thick,fill] (5.45,8.3) arc (0:360:0.55mm);
\node at (5.5,9.5) {\Large $3$};

\draw[thick] (6.3,8.3) -- (6.3,8.7);
\draw[thick] (6.5,8.3) -- (6.5,8.7);
\draw[thick] (6.7,8.3) -- (6.7,8.7);
\draw[thick,fill] (6.56,8.5) arc (0:360:0.55mm);
\draw[thick,fill] (6.76,8.5) arc (0:360:0.55mm);
\node at (6.5,9.5) {\Large $3$};

\draw[thick] (7.3,8.7) .. controls (7.33,8.15) and (7.47,8.15) .. (7.5,8.7);
\draw[thick] (7.7,8.3) -- (7.7,8.7);
\draw[thick,fill] (7.45,8.3) arc (0:360:0.55mm);
\draw[thick,fill] (7.76,8.5) arc (0:360:0.55mm);
\node at (7.5,9.5) {\Large $3$};

\draw[thick] (8.3,8.3) -- (8.3,8.7);
\draw[thick] (8.5,8.3) -- (8.5,8.7);
\draw[thick] (8.7,8.3) -- (8.7,8.7);
\draw[thick,fill] (8.36,8.5) arc (0:360:0.55mm);
\draw[thick,fill] (8.56,8.5) arc (0:360:0.55mm);
\draw[thick,fill] (8.76,8.5) arc (0:360:0.55mm);
\node at (8.5,9.5) {\Large $1$};

\draw[thick] (.3,7.3) -- (.3,7.7);
\draw[thick] (.5,7.3) -- (.5,7.7);
\draw[thick] (.7,7.3) -- (.7,7.7);
\draw[thick,fill] (.36,7.5) arc (0:360:0.55mm); 
\draw[thick,fill] (.56,7.5) arc (0:360:0.55mm); 
\draw[thick,fill] (.76,7.5) arc (0:360:0.55mm); 

\draw[thick] (.3,6.3) -- (.3,6.7);
\draw[thick] (.5,6.3) -- (.5,6.7);
\draw[thick] (.7,6.3) -- (.7,6.7);
\draw[thick,fill] (.36,6.5) arc (0:360:0.55mm); 
\draw[thick,fill] (.56,6.5) arc (0:360:0.55mm);

\draw[thick] (.3,5.7) .. controls (.33,5.15) and (.47,5.15) .. (.5,5.7);
\draw[thick] (.7,5.3) -- (.7,5.7);
\draw[thick,fill] (.45,5.3) arc (0:360:0.55mm);
\draw[thick,fill] (.76,5.5) arc (0:360:0.55mm);

\draw[thick] (.3,4.7) .. controls (.33,4.15) and (.47,4.15) .. (.5,4.7);
\draw[thick] (.7,4.3) -- (.7,4.7);
\draw[thick,fill] (.46,4.3) arc (0:360:0.55mm); 

\draw[thick] (.3,3.7) .. controls (.33,3.15) and (.47,3.15) .. (.5,3.7);
\draw[thick] (.7,3.3) -- (.7,3.7);
\draw[thick,fill] (.76,3.5) arc (0:360:0.55mm);

\draw[thick] (.3,2.3) -- (.3,2.7);
\draw[thick] (.5,2.3) -- (.5,2.7);
\draw[thick] (.7,2.3) -- (.7,2.7);
\draw[thick,fill] (.36,2.5) arc (0:360:0.55mm);

\draw[thick] (.3,1.7) .. controls (.33,1.15) and (.47,1.15) .. (.5,1.7);
\draw[thick] (.7,1.3) -- (.7,1.7);
 
\draw[thick] (.3,.3) -- (.3,.7);
\draw[thick] (.5,.3) -- (.5,.7);
\draw[thick] (.7,.3) -- (.7,.7);

\node at (2.5,7.5) {\Large $0$};
\node at (3.5,7.5) {\Large $0$};
\node at (4.5,7.5) {\Large $0$};
\node at (5.5,7.5) {\Large $0$};
\node at (6.5,7.5) {\Large $0$};
\node at (7.5,7.5) {\Large $0$};
\node at (8.5,7.5) {\Large $0$};
\node at (3.5,6.5) {\Large $0$};
\node at (4.5,6.5) {\Large $0$};
\node at (5.5,6.5) {\Large $0$};
\node at (6.5,6.5) {\Large $0$};
\node at (7.5,6.5) {\Large $0$};
\node at (8.5,6.5) {\Large $0$};
\node at (4.5,5.5) {\Large $0$};
\node at (5.5,5.5) {\Large $0$};
\node at (6.5,5.5) {\Large $0$};
\node at (7.5,5.5) {\Large $0$};
\node at (8.5,5.5) {\Large $0$};
\node at (5.5,4.5) {\Large $0$};
\node at (6.5,4.5) {\Large $0$};
\node at (7.5,4.5) {\Large $0$};
\node at (8.5,4.5) {\Large $0$};
\node at (6.5,3.5) {\Large $0$};
\node at (7.5,3.5) {\Large $0$};
\node at (8.5,3.5) {\Large $0$};
\node at (7.5,2.5) {\Large $0$};
\node at (8.5,2.5) {\Large $0$};
\node at (8.5,1.5) {\Large $0$};

\node at (1.5,7.5) {\large $\beta_1^3$ \hspace{-1mm}{\Large $\I_1$}};
\node at (1.5,6.5) {\Large $*$};
\node at (1.5,5.5) {\Large $*$};
\node at (1.5,4.5) {\Large $*$};
\node at (1.5,3.5) {\Large $*$};
\node at (1.5,2.5) {\Large $*$};
\node at (1.5,1.5) {\Large $*$};
\node at (1.5,0.5) {\Large $*$};
\node at (2.5,6.5) {\large $\beta_1^3$ \hspace{-1mm}{\Large $\I_3$}};
\node at (2.5,5.5) {\Large $0$};
\node at (2.5,4.5) {\Large $*$};
\node at (2.5,3.5) {\Large $*$};
\node at (2.5,2.5) {\Large $*$};
\node at (2.5,1.5) {\Large $*$};
\node at (2.5,0.5) {\Large $*$};
\node at (3.5,5.5) {\large $\beta_1\gamma_1$ \hspace{-1mm}{\Large $\I_3$}};
\node at (3.5,4.5) {\Large $*$};
\node at (3.5,3.5) {\Large $*$};
\node at (3.5,2.5) {\Large $*$};
\node at (3.5,1.5) {\Large $*$};
\node at (3.5,0.5) {\Large $*$};
\node at (4.5,4.5) {\large $\beta_1\gamma_1$ \hspace{-1mm}{\Large $\I_3$}};
\node at (4.5,3.5) {\Large $0$};
\node at (4.5,2.5) {\Large $*$};
\node at (4.5,1.5) {\Large $*$};
\node at (4.5,0.5) {\Large $*$};
\node at (5.5,3.5) {\large $\beta_1\gamma_1$ \hspace{-1mm}{\Large $\I_3$}};
\node at (5.5,2.5) {\Large $*$};
\node at (5.5,1.5) {\Large $*$};
\node at (5.5,0.5) {\Large $*$};
\node at (6.5,2.5) {\large $\beta_1^3$ \hspace{-1mm}{\Large $\I_3$}};
\node at (6.5,1.5) {\Large $*$};
\node at (6.5,0.5) {\Large $*$};
\node at (7.5,1.5) {\large $\beta_1\gamma_1$ \hspace{-1mm}{\Large $\I_3$}};
\node at (7.5,0.5) {\Large $*$};
\node at (8.5,0.5) {\large $\beta_1^3$ \hspace{-1mm}{\Large $\I_1$}};
\end{tikzpicture}
    \caption{Twisted Gram block matrix when $n=3$, where rows represent orbits, and the columns dual orbits. One element of the orbit is shown in each row and column. 
    The cardinality of the orbit is shown to the left or above each orbit representative. $\I_k$ denotes the $k\times k$ identity matrix. The determinant is $\beta_1^{36} \gamma_1^{12}$.}
    \label{fig3_2}
\end{figure}

\begin{table}[ht]
\centering
\begin{tabular}{|c|c|c||c|c|c|}
\hline
\multicolumn{3}{c}{Along the row} & 
\multicolumn{3}{c}{Along the column} \\ \hline 
$(a,b,c,d)$ & $\ddef$ & $\comp$ & $(b,a,d,c)$ & $\ddef$ & $\comp$\\ 
\hline
$(0,3,0,0)$ & $3$ & $3$ & $(3,0,0,0)$ & $0$ & $3$ \\ 
\hline
$(0,1,0,1)$ & $2$ & $2$ & $(1,0,1,0)$ & $0$ & $2$\\ 
\hline
$(1,2,0,0)$ & $2$ & $3$ & $(2,1,0,0)$ & $1$ & $3$ \\ 
\hline
$(1,0,0,1)$ & $1$ & $2$ & $(0,1,1,0)$ & $1$ & $2$ \\ 
\hline
$(0,1,1,0)$ & $1$ & $2$ & $(1,0,0,1)$ & $1$ & $2$ \\ 
\hline
$(2,1,0,0)$ & $1$ & $3$ & $(1,2,0,0)$ & $2$ & $3$ \\ 
\hline
$(1,0,1,0)$ & $0$ & $2$ & $(0,1,0,1)$ & $2$ & $2$ \\ 
\hline
$(3,0,0,0)$ & $0$ & $3$ & $(0,3,0,0)$ & $3$ & $3$ \\ 
\hline
\end{tabular}
    \caption{Ordering of the orbits for the Gram matrix for $n=3$.}
    \label{table3_3}
\end{table}

\input{fig3_3} 

\begin{table}[ht]
\centering
\begin{tabular}{|c|c|c||c|c|c|}
\hline
\multicolumn{3}{c}{Along the row} & 
\multicolumn{3}{c}{Along the column} \\ \hline 
$(a,b,c,d)$ & $\ddef$ & $\comp$ & $(b,a,d,c)$ & $\ddef$ & $\comp$\\ 
\hline
$(0,4,0,0)$ & $4$ & $4$ & $(4,0,0,0)$ & $0$ & $4$ \\ 
\hline
$(0,2,0,1)$ & $3$ & $3$ & $(2,0,1,0)$ & $0$ & $3$\\ 
\hline
$(1,3,0,0)$ & $3$ & $4$ & $(3,1,0,0)$ & $1$ & $4$ \\ 
\hline
$(0,0,0,2)$ & $2$ & $2$ & $(0,0,2,0)$ & $0$ & $2$ \\ 
\hline
$(0,2,1,0)$ & $2$ & $3$ & $(2,0,0,1)$ & $1$ & $3$ \\ 
\hline
$(1,1,0,1)$ & $2$ & $3$ & $(1,1,1,0)$ & $1$ & $3$ \\ 
\hline
$(2,2,0,0)$ & $2$ & $4$ & $(2,2,0,0)$ & $2$ & $4$ \\ 
\hline
$(0,0,1,1)$ & $1$ & $2$ & $(0,0,1,1)$ & $1$ & $2$ \\ 
\hline
$(1,1,1,0)$ & $1$ & $3$ & $(1,1,0,1)$ & $2$ & $3$ \\ 
\hline
$(2,0,0,1)$ & $1$ & $3$ & $(0,2,1,0)$ & $2$ & $3$ \\ 
\hline
$(3,1,0,0)$ & $1$ & $4$ & $(1,3,0,0)$ & $3$ & $4$ \\ 
\hline
$(0,0,2,0)$ & $0$ & $2$ & $(0,0,0,2)$ & $2$ & $2$ \\ 
\hline
$(2,0,1,0)$ & $0$ & $3$ & $(0,2,0,1)$ & $3$ & $3$ \\ 
\hline
$(4,0,0,0)$ & $0$ & $4$ & $(0,4,0,0)$ & $4$ & $4$ \\ 
\hline
\end{tabular}
    \caption{Ordering of the orbits for the Gram matrix for $n=4$.}
    \label{table3_4}
\end{table}

\clearpage   
  
\section*{Acknowledgments}
The authors would like to thank Mikhail Khovanov for introducing the authors to decorated low-dimensional cobordisms, and for guidance and feedback on this manuscript.

\bibliographystyle{amsalpha} 

\bibliography{1D-defect-det}

\end{document}